\documentclass[10pt,reqno,oneside]{article}
\usepackage{amsmath, amsthm, mathtools, amssymb, enumerate,esint}
\usepackage{graphicx}
\usepackage{xcolor}
\usepackage{caption}
\usepackage{datetime}
\usepackage{fancyhdr}
\usepackage{marginnote}

\usepackage[unicode,breaklinks=true,colorlinks=true,linkcolor=blue,urlcolor=blue,citecolor=blue]{hyperref}
\usepackage[margin=1in]{geometry}
\usepackage{mdframed}
\usepackage{cancel}
\numberwithin{equation}{section}

\begin{document}
\def\tdot{{\gdot}}
\def\intk{[k T_0, (k+1)T_0]}
\def\Dg{{D'g}}
\def\ua{u^{\alpha}}
\def\ques{{\colr \underline{??????}\colb}}
\def\nto#1{{\colC \footnote{\em \colC #1}}}
\def\fractext#1#2{{#1}/{#2}}
\def\fracsm#1#2{{\textstyle{\frac{#1}{#2}}}}   %smaller version of frac
\def\baru{U}
\def\nnonumber{}
\def\palpha{p_{\alpha}}
\def\valpha{v_{\alpha}}
\def\qalpha{q_{\alpha}}
\def\walpha{w_{\alpha}}
\def\falpha{f_{\alpha}}
\def\dalpha{d_{\alpha}}
\def\galpha{g_{\alpha}}
\def\halpha{h_{\alpha}}
\def\psialpha{\psi_{\alpha}}
\def\psibeta{\psi_{\beta}}
\def\betaalpha{\beta_{\alpha}}
\def\gammaalpha{\gamma_{\alpha}}
\def\TTalpha{T_{\alpha}}
\def\TTalphak{T_{\alpha,k}}
\def\falphak{f^{k}_{\alpha}}
\def\R{\mathbb R}
\newcommand {\Dn}[1]{\frac{\partial #1  }{\partial N}}
\def\andand{\text{\indeq and\indeq}}
\def\mm{m}
\def\colr{{}}%%%out
\def\colr{\color{red}}
\def\colu{\color{blue}}
\def\bnew{\color{red}}
\def\enew{\color{black}}
\def\bold{\color{blue}}
\def\eold{\color{black}}
\def\colg{\color{green}}
\def\colb{{}}%%%out
\def\colb{\color{black}}
\def\cole{{}}%%%out
\def\colA{{}}%%%out
\def\colB{{}}%%%out
\def\colC{{}}%%%out
\def\colD{{}}%%%out
\def\colE{{}}%%%out
\def\colF{{}}%%%out

\def\rref#1{{\ref{#1}{\rm \tiny \fbox{\tiny #1}}}}
\def\theequation{\fbox{\bf \thesection.\arabic{equation}}}
\def\ccite#1{{\cite{#1}{\rm \tiny ({#1})}}}
\def\startnewsection#1#2{\newpage\colg \section{#1}\colb\label{#2}}
\setcounter{equation}{0}
\pagestyle{fancy}
\cfoot{}
\rfoot{\thepage}
\chead{}
\rhead{\thepage}
\def\nnewpage{\newpage}
\newcounter{startcurrpage}
\newcounter{currpage}
\def\llll#1{{\rm\tiny\fbox{#1}}}
   \def\blackdot{{\color{red}{\hskip-.0truecm\rule[-1mm]{4mm}{4mm}\hskip.2truecm}}\hskip-.3truecm}%%%out
   \def\bdot{{\color{blue} {\hskip-.0truecm\rule[-1mm]{4mm}{4mm}\hskip.2truecm}}\hskip-.3truecm}%%%out
   \def\purpledot{{\colA{\rule[0mm]{4mm}{4mm}}\colb}}%%%out
   \def\pdot{\purpledot}%%%out
   \def\gdot{{\colB{\rule[0mm]{4mm}{4mm}}\colb}}%%%out

\def\nts#1{{\hbox{\bf ~#1~}}} %nts=note to self
\def\igor#1{{\colu\bf{\hbox{\bf IK: ~#1~}}}} %nts=note to self
\def\qi#1{{\hbox{\bf\color{purple} QX: ~#1~}}} %nts=note to self
\def\wojtek#1{{\hbox{\bf WO: ~#1~}}} %nts=note to self
\def\nts#1{{\colr\small\hbox{\bf ~#1~}}} %nts=note to self%%%out
\def\ntsf#1{\footnote{\colb\hbox{\rm ~#1~}}} %nts=note to self%%%out
\def\bigline#1{~\\\hskip2truecm~~~~{#1}{#1}{#1}{#1}{#1}{#1}{#1}{#1}{#1}{#1}{#1}{#1}{#1}{#1}{#1}{#1}{#1}{#1}{#1}{#1}{#1}\\}%%%out
\def\biglineb{\bigline{$\downarrow\,$ $\downarrow\,$}}%%%out
\def\biglinem{\bigline{---}}%%%out
\def\biglinee{\bigline{$\uparrow\,$ $\uparrow\,$}}%%%out
\def\inon#1{\hbox{\ \ \ \ \ }\hbox{#1}}
\def\onon#1{\inon{on~$#1$}}
\def\inin#1{\inon{in~$#1$}}
\def\Omf{\Omega_{\text f}}
\def\Ome{\Omega_{\text e}}
\def\Gae{\Gamma_{\text e}}
\def\Gaf{\Gamma_{\text f}}
\def\Gac{\Gamma_{\text c}}
\def\wext{\tilde{w}}
\def\wexts{\widetilde{S w}}
\def\wexta{\overline{w}}
\def\wextb{\overline{\overline{w}}}
\def\mbar{{\overline M}}
\def\tilde{\widetilde}

\newtheorem{Theorem}{Theorem}[section]
\newtheorem{Corollary}[Theorem]{Corollary}
\newtheorem{Proposition}[Theorem]{Proposition}
\newtheorem{Lemma}[Theorem]{Lemma}
\newtheorem{Remark}[Theorem]{Remark}
\newtheorem{definition}{Definition}[section]

\def\theequation{\thesection.\arabic{equation}}
\def\endproof{\hfill$\Box$\\}
\def\square{\hfill$\Box$\\}
\def\comma{ {\rm ,\qquad{}} }            %comma in a formula
\def\commaone{ {\rm ,\quad{}} }         %second comma in a formula
\def\dist{\mathop{\rm dist}\nolimits}    %distance
\def\sgn{\mathop{\rm sgn\,}\nolimits}    %sgn
\def\Tr{\mathop{\rm Tr}\nolimits}    %trace
\def\curl{\mathop{\rm curl}\nolimits}    %curl
\def\div{\mathop{\rm div}\nolimits}    %divergence
\def\supp{\mathop{\rm supp}\nolimits}    %divergence
\def\divtwo{\mathop{{\rm div}_2\,}\nolimits}    %two dimensional divergence
\def\re{\mathop{\rm {\mathbb R}e}\nolimits}    %distance
\def\indeq{\qquad{}\!\!\!\!}                     %indentation in formulas
\def\period{.}                           %period in a formula
\def\semicolon{\,;}                      %semicolon in a formula
\newcommand{\cD}{\mathcal{D}}
\newcommand{\eqnb}{\begin{equation}}
\newcommand{\eqne}{\end{equation}}
\newcommand{\na}{\nabla }
\newcommand{\bog}{b}
\newcommand{\bb}{\nu }
\newcommand{\ww}{{\overline{w}}}
\newcommand{\la}{\lambda }
\newcommand{\p}{\partial }
\renewcommand{\d}{\mathrm{d} }
\newcommand{\N}{\mathbb{N}}
\newcommand{\on}{\omega^{(n)}}
\newcommand{\onn}{\omega^{(n+1)}}
\newcommand{\onm}{\omega^{(n-1)}}
\newcommand{\nn}{\mathsf{n}}
\newcommand{\ee}{\mathrm{e}}
\newcommand{\T}{\mathbb{T}}
\renewcommand{\R}{\mathbb{R}}
\newcommand{\lec}{\lesssim  }
\newcommand{\gec}{\gtrsim  }
\newcommand{\tom}{\tilde{\omega}}
\newcommand{\tu}{\tilde{u}}
\newcommand{\un}{^{(n)}}
\newcommand{\unp}{^{(n+1)}}
\newcommand{\unm}{^{(n-1)}}
\newcommand{\lo}{L^2(\Omega)}
\newcommand{\lio}{L^\infty(\Omega)}
\newcommand{\ls}{L^2(S)}
\newcommand{\lis}{L^\infty(S)}
\def\NL{\text{NL}}

\title{Gevrey instability in the inviscid inflow-outflow problem}
\author{Igor~Kukavica, Wojciech~O\.za\'nski, and Qi Xu}

\maketitle
\date{}
\medskip
\begin{abstract}
We consider the 2D incompressible Euler equations on a periodic channel $\mathbb{T}\times (0,1)$ with inflow-outflow boundary condition $u=(0,1)$ on $\mathbb{T} \times \{0,1 \}$. We also impose the incoming vorticity boundary condition $\omega =\eta $ on $\mathbb{T}\times \{ 0 \}$, where $\eta$ is prescribed. We show that the problem is globally well-posed in Gevrey spaces (for any value of the Gevrey exponent $s>1$) as long as $\eta$ remains Gevrey. This proves that the inflow-outflow velocity boundary condition determines the solution locally in time if and only if the solution is considered in an analytic class. In particular, leaving the analytic class, nonuniquness of solutions occurs already in any Gevrey class, by prescribing~$\eta$. 
Hence, prescribing an analytic inflow-outflow velocity leads
to precisely one analytic and continuum $s$-Gevrey solutions for every~$s>1$.
Furthermore, the result implies that if $\eta$ is analytic,
then the unique global solution can lose analyticity in space for all $t>0$, 
but remain $s$-Gevrey regular for all~$s$.

\end{abstract}
%\noindent\thanks{\em Mathematics Subject Classification\/}:
%35R35, %Free boundary problems
%35Q30, %Stokes and Navier-Stokes equations
%76D05  %Navier-Stokes equations 
\noindent\thanks{\em Keywords:\/}
Euler equations, inflow-outflow problem, instability, Gevrey regularity

\section{Introduction}\label{sec_intro}
We consider the two-dimensional incompressible Euler equations,
  \begin{align}
    \begin{split}
        u_t + (u\cdot \nabla) u + \nabla p &= 0,
     \\
     \nabla \cdot u &= 0,
  \end{split}
   \label{EQ01}
  \end{align}
  where $u=(u_1,u_2)$ denotes the velocity field and $p$ is the pressure function. We consider the inflow-outflow problem in a periodic channel
  \begin{equation}
   \Omega
   \coloneqq 
   \mathbb{T}\times [0,1]
   ,
   \label{EQ02}
  \end{equation}
with the size of the torus $\T$ set to~$1$, and the inflow-outflow velocity is given on the boundary,
  \begin{equation}
   u \cdot \mathsf{n} = \overline{u}\cdot \mathsf{n} \qquad \text{ on } \p \Omega ,
   \label{EQ03pre}
  \end{equation}
  where $\overline{u}$ is a prescribed velocity field. 
  
  The problem \eqref{EQ01}--\eqref{EQ03pre} was initially studied by Zaj\c{a}czkowski \cite{Z1,Z2,Z3,Z4}, Antontsev, Kazhikov, and Monakhov~\cite{AKM1,AKM2},
 then by Petcu in~\cite{P}, 
and more recently by Gie, Kelliher, and Mazzucato
 in~\cite{GKM1,GKM2}.
It is known that the inviscid inflow-outflow problem on any domain and in any dimension, becomes very difficult to study as soon as $\overline{u} \ne 0$, see \cite{KOS} for a discussion on the difficulties.  As mentioned in  \cite{GKM2}, the elliptic system one obtains for the pressure function $p$ (see~\cite[(1.2)]{KOS}) and the Gromeka-Lamb form of the Euler equations (see~\cite[(1.6)]{KOS}) give, roughly speaking, 
\eqnb\label{issue}
\text{two relations among four quantities }\omega, \nabla_{\p \Omega}p, u\cdot \mathsf{n} , u - u \cdot \mathsf{n}
\eqne
on the boundary~$\p \Omega$. This  suggests that, in order to obtain any local well-posedness of \eqref{EQ01}--\eqref{EQ07} one must find a way of determining two relations, so that \eqref{issue} provides the remaining two.
This is related to the fact that, in the 3D case, on the inflow portion of $\p \Omega$, the incoming vorticity $\omega$ must lie in the range of $\mathrm{curl}$ and that $\mathrm{div}\,\omega$ must remain conserved (see \cite[(1.6)]{GKM2}). One way of dealing with this issue is to impose an additional boundary condition on the entire velocity field $u$ on the inflow portion $\p \Omega_{\rm in}$ of the boundary~$\p \Omega$. This problem was considered in \cite[Theorem~1.2]{GKM2}, who proved the local well-posedness in $C^{N+1,\alpha}$ ($\alpha \in (0,1)$), provided a certain compatibility condition, involving the $N$-th and ($N+1$)-st time derivatives, holds on $\p \Omega_{\rm in}$ at~$t=0$. Remarkably, as shown by Kukavica, O\.za\'nski, and Sammartino \cite{KOS}, the issue \eqref{issue} disappears for analytic solutions to the inviscid inflow-outflow system if $\overline{u}$ is analytic in space.  \\

\begin{Theorem}[Local well-posedness in analytic spaces \cite{KOS}]\label{T_an}
Consider the analytic norm \[ \| f \|_{X(\tau )} \coloneqq \| f \|_{H^3} + \sum_{| \alpha | \geq  3} \frac{|\alpha |^r }{|\alpha |!} \tau^{|\alpha | - 3 } \epsilon^{\alpha_2} \| D^\alpha u \|_{L^2}\]
 on the domain $\Omega = \T\times (0,1)$, where $\epsilon>0$ is a sufficiently small constant. If $\| \overline{u } \|, \| u_0 \|_{X(\tau_0)} < \infty$ for some $\tau_0 \in (0,1]$, where $u_0$ is a divergence-free initial velocity satisfying \eqref{EQ03pre}, then there exists $T_0, M>0$ such that there is a unique solution $u\in C([0,T_0 ] ; X(\tau(t)))$ to \eqref{EQ01}--\eqref{EQ03pre} with $u(0)=u_0$, where $\tau(t) \coloneqq \tau_0 - Mt$.
\end{Theorem}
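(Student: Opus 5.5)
Theorem~\ref{T_an} is a local well-posedness result in a scale of analytic norms with a shrinking radius $\tau(t)=\tau_0-Mt$, so I plan to prove it by a priori estimates plus an approximation/fixed-point argument.

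\emph{Reduction to a velocity-only problem.} I do not impose any vorticity condition. Instead, I use analyticity to recover the missing boundary information from the interior. Writing the equation as $u_t+(u\cdot\nabla)u+\nabla p=0$ and taking derivatives $D^\alpha$ with $\alpha=(\alpha_1,\alpha_2)$, tangential derivatives $\p_1^{\alpha_1}$ commute with the boundary. Normal derivatives $\p_2$ of the normal component $u_2$ are recovered from tangential ones through $\p_2u_2=-\p_1u_1$. Normal derivatives of $u_1$ are recovered from the vorticity. The weight $\epsilon^{\alpha_2}$ is designed exactly so that normal derivatives are cheaper. This lets one bound the full norm by tangential quantities plus $\epsilon$ times the full norm, and the latter is absorbed for $\epsilon$ small.

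\emph{Key analytic estimate.} For each $\alpha$ I estimate $\frac{d}{dt}\|D^\alpha u\|_{L^2}$. The boundary terms arising from integration by parts do not vanish, since $u\cdot\nn\neq0$; instead of estimating them by energy, I avoid them altogether. I use the fact that a Cauchy--Kovalevskaya-type (abstract Nirenberg--Nishida) estimate only needs
\[
\frac{d}{dt}\|u\|_{X(\tau)}\le \dot\tau\,\|u\|_{Y(\tau)} + C\bigl(1+\|u\|_{X(\tau)}\bigr)^2\,\|u\|_{Y(\tau)},
\]
where $Y$ is the norm with one extra $|\alpha|/\tau$ factor, i.e.\ the analyticity radius derivative. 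In this approach the term $u\cdot\nabla D^\alpha u$ costs one derivative, and the cost is paid by $-\dot\tau=M$, taking $M$ large relative to $\|u\|_X$. The pressure is obtained from the elliptic Neumann problem $-\Delta p=\p_i u_j\p_j u_i$, $\p_2p=-\p_t\bar u_2-(u\cdot\nabla)u_2$ on $\p\Omega$. Its analytic bounds in the same weighted norm follow from elliptic analytic regularity in the strip, using the Fourier series in $x_1$ and the explicit ODE in $x_2$. The factor $|\alpha|^r/|\alpha|!$ handles the Leibniz sums via the standard combinatorial inequality $\sum_{\beta\le\alpha}\binom{\alpha}{\beta}\frac{|\beta|!\,|\alpha-\beta|!}{|\alpha|!}\frac{|\alpha|^r}{|\beta|^r|\alpha-\beta|^r}\lesssim |\alpha|$ for $r$ large.

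\emph{Existence, uniqueness, and the main obstacle.} Existence comes from a Picard/Cauchy--Kovalevskaya iteration in the scale $X(\tau)$, $0<\tau<\tau_0$, for which the estimate above is a contraction for small $T_0$. Uniqueness follows from the same estimate applied to the difference of two solutions in a lower norm. I expect the main obstacle to be the boundary: the inflow/outflow terms $\int_{\p\Omega}\bar u_2|D^\alpha u|^2$ and the fact that $D^\alpha u$ on $\p\Omega$ is not given. I would handle them by writing $u=\bar u+v$, where $v$ has zero normal component, and expanding in $x_2$ near the boundary via the equation itself, in Cauchy--Kovalevskaya fashion with $x_2$ as the evolution variable for the $\p_2$-derivatives. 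This is precisely where analyticity, and not merely Sobolev regularity, is essential.
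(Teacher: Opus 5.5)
The paper does not prove Theorem~\ref{T_an}; it quotes it from \cite{KOS}, so your argument has to stand on its own. The core of it is your second paragraph, and that is the right mechanism. You estimate $\|D^\alpha(u\cdot\nabla u)\|_{L^2}$ and $\|D^\alpha\nabla p\|_{L^2}$ directly, never integrating by parts in the transport term, so no boundary integral arises, and you pay for the lost derivative with $\dot\tau=-M$. This is exactly the step that is special to $s=1$. Differentiating $\tau^{|\alpha|-3}$ yields a factor $\sim|\alpha|/\tau$, which matches the price of one extra derivative against $|\alpha|!$ but not the price $\sim|\alpha|^s/\tau$ against $|\alpha|!^s$. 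That is consistent with Corollary~\ref{cor0}, and it is why the present paper must integrate by parts, prescribe $\eta$, and use commutator bounds that lose no derivative.

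However, your first and third paragraphs are wrong and should be discarded. Recovering $\partial_2u_1$ from $\omega$ is circular. There are no vorticity data, and $\|D^{\alpha-e_2}\omega\|$ is of the same order as $\|D^\alpha u\|$, so you do not get ``tangential quantities plus $\epsilon$ times the full norm''. The direct estimate needs no such reduction of $u$ anyway.

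The third paragraph is the more serious problem. The ``main obstacle'' $\int_{\partial\Omega}\bar u_2|D^\alpha u|^2$ appears only if you integrate $\int(u\cdot\nabla D^\alpha u)\cdot D^\alpha u$ by parts, which your own scheme avoids. The proposed cure, Cauchy--Kovalevskaya with $x_2$ as the evolution variable, needs the full trace of $u$ on $S$ (equivalently of $u_1$, or of $\omega$) as Cauchy data. That is precisely the unprescribed quantity. Choosing it is what this paper does through $\eta$, and it yields non-uniqueness outside the analytic class. So analyticity enters through the $\dot\tau$ absorption, not through an expansion in $x_2$.

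Once those paragraphs are removed, two points still need to be done correctly.

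First, the real technical work is the pressure. On $S\cup\tilde S$, using $\partial_2u_2=-\partial_1u_1$, the Neumann data are $\partial_2p=-\partial_t\bar u_2-u_1\partial_1\bar u_2+\bar u_2\partial_1u_1$. These contain only tangential derivatives of $u$, but they cost a derivative through the trace, and this loss must also go into the $Y$-norm and be absorbed by $M$. The weight $\epsilon^{\alpha_2}$ does not help the transport term: there $u_2\partial_2D^\alpha u$ costs a factor $\epsilon^{-1}$, which is harmless only because $M\gtrsim\epsilon^{-1}$ is allowed. Its natural role is in the pressure, trading normal for tangential derivatives of $p$ via $\partial_2^2p=-\partial_1^2p-\partial_iu_j\partial_ju_i$. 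The factor $\epsilon^2$ per step makes the sum over $\alpha_2$ geometric, instead of producing an extra factor $|\alpha|$ on top of the trace loss.

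Second, uniqueness must be proved in an analytic norm, e.g.\ $X(\tilde\tau(t))$ with a smaller, faster-decreasing radius, not in a Sobolev ``lower norm''. An energy estimate for $w=u-v$ produces $\frac12\int_S\bar u_2w_1^2$ with $w_1|_S$ unknown. Corollary~\ref{cor0} shows that uniqueness really does fail in every $G_s$ with $s>1$.
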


We refer the reader to \cite[Theorem~1.2]{KOS} for a more precise statement and the proof. We emphasize that Theorem~\ref{T_an} gives local well-posedness provided \emph{only} the inflow-outflow velocity $\overline{u}$ is prescribed, and it does not require \emph{any} compatibility conditions. The main purpose of this paper is to show that, by replacing the analytic class by any Gevrey class with Gevrey exponent $s>1$, the inflow-outflow velocity $\overline{u}$ is insufficient for local well-posedness.\\

To achieve this, we consider another way of dealing with the issue \eqref{issue}, namely by prescribing the vorticity  $\omega = \mathrm{curl}\,u = \p_1 u_2 - \p_2 u_1$ on the inflow part of the boundary  $\p \Omega$---the so-called \emph{vorticity boundary conditions}. The first result in this direction was established by Zaj\c{a}czkowski~\cite{Z1}, who proved local well-posedness in Sobolev spaces on $3$D domain. Recently~\cite[Theorem~1.3]{GKM2} established local well-posedness in $C^{k,\alpha}$ spaces on $3$D domains. We also refer the reader to \cite{Z2} where a boundary condition for pressure is assumed on the outflow part of the boundary, to \cite{Z2,Z3,Z4} for results concerned with domains with corners, to \cite{GKM2} for an extensive introduction to the problem, as well as to the classical work \cite{AKM2} on the subject.

For simplicity, we consider the $2$D setting as defined in \eqref{EQ02}, and we assume that the inflow-outflow velocity $\overline{u}$ is constant and equal to $(0,1)$,  so that \eqref{EQ03pre} becomes 
 \begin{equation}
   u_2\equiv 1
   \onon{S\cup \tilde S}  = \p \Omega 
   ,
   \label{EQ03}
  \end{equation}
where $S:=\{x_2=0\}$ and $\tilde S:=\{x_2=1\}$.
More general settings can be considered using the methods introduced here. We impose the incoming vorticity boundary condition,
  \begin{equation}
   \omega|_{S}
   = \eta
   ,
   \label{EQ04}
  \end{equation}
where $\eta = \eta (x_1,t)$ is given. Note that the vorticity $\omega$ satisfies the transport equation
\begin{equation}
   \omega_t
   + (u\cdot \nabla) \omega = 0
   .
   \label{EQ07}
  \end{equation}
Observe that $u$ can be  partially  recovered from $\omega$ as a solution of the system 
  \begin{align}
  \begin{split}
   &\curl u = \omega,
   \\&
   \div u = 0,
   \\&
   u_2|_{S\cup \tilde S}= 1
   .
  \end{split}
   \label{EQ13}
  \end{align}
 In order to determine $u$ from \eqref{EQ13}, note that if $u $ solves the Euler equations  \eqref{EQ01}, then 
 \begin{align}
\begin{split}
    \frac{\d}{\d t}\int_{\Omega}u_1
    &=\int_{\Omega}\partial_tu_1 
    =\int_{\Omega}(-u_1\partial_1u_1-u_2\partial_2u_1-\partial_1 p_1)=-\int_{\Omega}u_2\partial_2 u_1
    \\&    =\int_{\Omega} u_1\underbrace{\partial_2u_2}_{=-\partial_1 u_1}+\int_{S}u_1
    -\int_{\tilde S}u_1
    =\underbrace{\int_{\Omega}\partial_1u_2 }_{=0} -\int_\Omega \partial_2u_1
    =\int_{\Omega}\omega,
    \end{split}
    \label{EQ63}
\end{align}
so that $u_1$, $u_2$ can be obtained as solutions of the Poisson problems 
 \begin{align}
  \begin{split}
   \Delta u_1 &= - \partial_{2}\omega \qquad \text{ in }\Omega,
   \\
   \partial_{2}u_1|_{S\cup \tilde S}
   &= - \omega \qquad \text{ on } S \cup \tilde S ,\\
   \int_\Omega u_1 &= \int_0^t \int_\Omega \omega 
  \end{split}
   \label{EQ14}
  \end{align}
and
  \begin{align}
  \begin{split}
   \Delta (u_2-1) &= \partial_{1}\omega \qquad \text{ in } \Omega ,
   \\
   u_2-1& = 0 \qquad \text{ on } S \cup \tilde S, 
  \end{split}
   \label{EQ15}
  \end{align}
respectively. For each $t$, we will denote by 
\eqnb\label{EQ15a}
u[\omega ] \coloneqq (u_1,u_2)
\eqne
the velocity field which solves the systems \eqref{EQ14} and~\eqref{EQ15}. 

We note in passing that  we should not  assign vorticity at the outflow boundary. This is visible already at the level of Sobolev estimates: Note that
  \begin{align}
  \begin{split}
   &  -\int u \cdot \nabla  D^{\alpha} \omega \,D^{\alpha}\omega
  =  \frac12 \int_{S}  (D^{\alpha}\omega)^2 u_2
           - \frac12 \int_{\tilde S}  (D^{\alpha}\omega)^2 u_2
  \leq   \int_{S} |D^{\alpha}\omega|^2
  ,
  \end{split}
   \label{EQ17}
  \end{align}
where we used \eqref{EQ03} in the last step and noted that the integral over $\tilde S$ gives a nonpositive term. Thus, taking $D^\alpha $ of the vorticity equation~\eqref{EQ07}, multiplying by $D^\alpha \omega$ and integrating by parts, we obtain
 \begin{align}
  \begin{split}
  \frac12 \frac{\d}{\d t}
  (D^{\alpha} \omega)^2
  &=
  \int_{S} (D^{\alpha}\omega)^2\underbrace{- \int_{\widetilde{S}} (D^{\alpha}\omega)^2}_{\leq 0}
  -\int \bigl(
          D^{\alpha}(u\nabla \omega)
	  - u \nabla D^{\alpha}\omega 
        \bigr)
	D^{\alpha}\omega 
  ,
  \end{split}
   \label{EQ18}
  \end{align}
which shows that $D^\alpha \omega|_{\widetilde{S}}$ does not increase any Sobolev norm of~$\omega$. We note, however, that the Sobolev existence follows by more extensive considerations than just using the estimate~\eqref{EQ18}, see Section~\ref{sec_sobolev} for details.
\\

Our main result is concerned with $\eta$ belonging to a Gevrey class. Namely, we assume that there exist $N_0(t) ,N(t) \geq 1$ and $s>1$ such that
\begin{align}
        \begin{split}
         \| D^\alpha \eta \|_{L^2(S)}  \leq N_0 (t) N(t)^{|\alpha|}   |\alpha|!^s =: N_{|\alpha |} (t)
    \comma    \alpha	  \in\mathbb{N}_0 \times\{0\}\times \mathbb{N}_0
        \end{split}
        \label{EQ140}
    \end{align}
    and all $t\in [0,T_0]$, where we denoted by $|\alpha|=\alpha_1+\alpha_2+\alpha_3$ the order of a multiindex $\alpha=(\alpha_1,\alpha_2,\alpha_3)$. We also suppose that $u_0$ is such that 
\eqnb\label{EQ140a}
 \| D^\alpha \omega_0 \|_{L^2 (\Omega )} \leq N_0 (0) N(0)^{|\alpha |} | \alpha |!^s 
\eqne    
for all $\alpha \in \N_0^3$, where $N_0(0),N(0)>0$ are constants. We say that $\omega_0\in G_s$ ($s\geq 1$) if \eqref{EQ140a} holds  for some $N_0(0),N(0)
>0$.\\

For convenience, we will abuse the notation slightly by writing
$\eta (\cdot ,0 ) = \omega_0$. Namely, at $t=0$, $\eta$ is a function  $\omega_0$  on $\Omega$, rather than on~$S$.     We will denote by $J_k(t)$ any constant which may depend on $N_i(s)$ for $i\in \{ 0 , \ldots , k\}$ and $s\in [0,t]$. We will often use the short-hand notation $J_k \equiv J_k(t)$. We now state our main result.

\begin{Theorem}[Global well-posedness in Gevrey spaces]
\label{T03}
Let $s>1$. If $\eta $ satisfies \eqref{EQ140}, then there exist 
$L_0(t) ,L(t) \geq 1$,
depending only on $N_0, N$,
and a unique solution $u$ to \eqref{EQ01}, \eqref{EQ03}--\eqref{EQ63} for all $t>0$  such that  $\omega \in C ([0,\infty );H^k )$ for each $k\geq 0$, and 
\begin{align}
     \|D^\alpha \omega\|_{L^2(\Omega)}\leq
     L_0(t) L(t)^{|\alpha|} |\alpha|!^{s}
         \comma t\geq 0
	 \commaone \alpha\in\mathbb{N}_0^3.
        \label{EQ141}
    \end{align}
\end{Theorem}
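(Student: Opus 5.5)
The proof splits into three parts: global existence and uniqueness in Sobolev spaces, propagation of Gevrey regularity by an inductive estimate on derivatives, and a separate treatment of time derivatives through the equation. I expect the Gevrey step to be the hardest.

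\emph{Sobolev theory.} First construct a global Sobolev solution, for instance by iteration: given $\omega^{(n)}$, set $u^{(n)}=u[\omega^{(n)}]$ via \eqref{EQ14}--\eqref{EQ15}. Then solve the linear transport problem $\omega^{(n+1)}_t+u^{(n)}\cdot\nabla\omega^{(n+1)}=0$ with inflow data $\eta$ on $S$ and initial data $\omega_0$; since $u_2^{(n)}=1$ on $S\cup\tilde S$, this is well-posed by characteristics. The key structural fact is that in 2D the vorticity is transported, so $\|\omega\|_{L^\infty(\Omega)}\le \max(\|\omega_0\|_{L^\infty},\sup\|\eta\|_{L^\infty(S)})$. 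Characteristics either start at $t=0$ or enter through $S$ and exit through $\tilde S$, and the domain is a strip with $u_2=1$ on the boundary, so nothing exits through the lateral directions. With this $L^\infty$ bound, the Beale--Kato--Majda type argument (log-Lipschitz velocity, Brezis--Gallouet) applied to the energy identity \eqref{EQ18} gives $H^k$ bounds for all $t$. For these bounds the boundary term $\int_S|D^\alpha\omega|^2$ must be expressed through $\eta$. Tangential and time derivatives on $S$ are given directly by $\eta$. For normal derivatives, solve the vorticity equation for $\partial_2\omega=-(\omega_t+u_1\partial_1\omega)/u_2$ on $S$ (recall $u_2=1$ there) and iterate. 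This is why \eqref{EQ140} involves multiindices in $(x_1,x_2,t)$ with $\alpha_2=0$. Uniqueness follows from an $L^2$ estimate on the difference of two solutions: the boundary term on $S$ vanishes, the one on $\tilde S$ has a good sign, and $\int_\Omega u_1$ is fixed by \eqref{EQ63}.

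\emph{Gevrey propagation.} Next prove \eqref{EQ141} by induction on $m=|\alpha|$, with $\alpha$ including time derivatives. Assume \eqref{EQ141} for all orders $<m$ with $L_0(t),L(t)$ to be chosen, growing in $t$. Apply \eqref{EQ18} summed over $|\alpha|=m$:
\[
\frac{\d}{\d t}\|D^m\omega\|^2\le \|D^m\omega\|_{L^2(S)}^2+\sum_{0<\beta\le\alpha}\binom{\alpha}{\beta}\|D^\beta u\,D^{\alpha-\beta+1}\omega\|\,\|D^\alpha\omega\|.
\]
For the boundary term, the recursion from the previous step expresses $D^\alpha\omega|_S$ through $D^\gamma\eta$ and lower-order products of $u$ and $\omega$ on $S$. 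Estimate these by $N_{|\gamma|}$ and, via trace, by the inductive hypothesis. For $u$, use elliptic regularity for \eqref{EQ14}--\eqref{EQ15}: $\|D^\beta u\|\lesssim\|D^{|\beta|-1}\omega\|+\cdots$, with the tangential and time derivatives commuting with the flat boundary conditions.

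In the commutator sum, the key point is that for $s>1$ the combinatorial sum $\sum_j\binom{m}{j}\frac{j!^s(m-j)!^s}{m!^s}$ converges with a gain, roughly $\sum_j\binom{m}{j}^{1-s}\lesssim1$ except at the endpoint terms. The endpoint term with $\beta$ of order $1$ is handled by the Sobolev bound on $\nabla u$. The term $\beta=\alpha$ is controlled by $\|\nabla\omega\|_{L^\infty}\|D^m u\|\lesssim\|D^{m-1}\omega\|$, which is lower order. This gives
\[
\frac{\d}{\d t}Y_m\le C(J_m+Y_{m}+\text{lower})
\]
with $Y_m=\|D^m\omega\|/(m!^s)$. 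Gronwall then yields the bound with $L(t)=C N(t)e^{Ct}$-type growth, uniformly in $m$ because the constants do not depend on $m$.

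\emph{Time derivatives.} For $\alpha_3>0$ the initial values $D^\alpha\omega(0)$ must be obtained from the equation. Bound them by $\omega_0\in G_s$ through the same recursive combinatorics. Consistency with the convention $\eta(\cdot,0)=\omega_0$ in \eqref{EQ140} supplies this.

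The main obstacle is the boundary term. The inflow recursion for normal derivatives on $S$ costs one time derivative per normal derivative, and it generates nonlinear products whose Gevrey combinatorics must close with constants independent of $m$. Here $s>1$ is essential, which is consistent with the failure in the analytic case. I would try first a weighted norm in which each normal derivative is counted as one full order, so the recursion is order-preserving, and verify the convolution bound $\sum_{j}\binom{m}{j}^{1-s}\le C_s$ explicitly.
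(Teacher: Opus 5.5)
Your Sobolev step and the $L^2$ uniqueness argument are in line with the paper. One caveat: before the $H^2$ estimate the paper also needs a bound on $\|\nabla\omega\|_4$. The reason is that the interpolation $\|\nabla\omega\|_4^2\lesssim\|\omega\|_\infty\|D^2\omega\|$ fails here, because of an uncontrolled term on the outflow boundary $\widetilde S$.

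\textbf{Gap 1: uniformity in $m$.} You assert that the level-$m$ inequality $\frac{\d}{\d t}Y_m\le C(J_m+Y_m+\text{lower})$ holds with $C$ independent of $m$. It does not.

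The commutator terms with $|\beta|=1$ give $\sum_{|\beta|=1}\binom{\alpha}{\beta}\|D^\beta u\|_\infty\|\nabla D^{\alpha-\beta}\omega\|\approx m\|\nabla u\|_\infty\|D^m\omega\|$. This is a loss of one derivative, and it is sharp for transport. In the bilinear terms the product carries two extra derivatives, and once these are counted the convolution sum is only $\lesssim m$ by \eqref{EQ29}, not $O(1)$.

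Worse, take your inductive hypothesis $\|D^j\omega\|\le L_0L^j j!^s$ for $j<m$, with $L_0\ge 1$. The bilinear terms in which both factors are of high order then contribute roughly $mL_0^2L^{m+2}$ to $\frac{\d}{\d t}(\|D^m\omega\|/m!^s)$. To dominate this by $\frac{\d}{\d t}(L_0L^m)\ge mL_0L^{m-1}\dot L$ you need roughly $\dot L\gtrsim L_0L^3$. That is a Riccati condition, and it closes only locally in time. So level-by-level Gronwall does not give \eqref{EQ141} globally with $L_0,L$ independent of $m$.

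The paper's mechanism differs on three counts.
\begin{enumerate}
\item It normalizes by $b_\alpha=\epsilon^\alpha\tau^{(|\alpha|-2)_+}\|D^\alpha\omega\|/|\alpha|!^s$. The offset $-2$ makes a product of two normalized factors of total order $|\alpha|+2$ scale like order $|\alpha|$.
\item It propagates a single threshold $b_m(t)\le K(t)$ for all $m$, with $K$ the Sobolev bound from Theorem~\ref{T01}. This makes every nonlinear term linear: $\NL(\alpha)\le\widetilde{C}mK^4b_{m+1}^2$.
\item It absorbs the factor $m$ with the shrinking radius $\tau(t)=\tau(0)e^{-8\widetilde{C}\int_0^tK^6}$. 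The contribution $\frac{|\alpha|}{4}\frac{\dot\tau}{\tau}b_\alpha^2$ is also proportional to $|\alpha|$. The threshold is propagated by a barrier argument at the first time $b_{m+1}$ reaches $K$, not by Gronwall.
\end{enumerate}
Only at the end is this converted to $L_0=K$, $L=K/\tau$.

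\textbf{Gap 2: the boundary term.} You correctly single it out, but you leave it unresolved, and your proposed remedy targets the wrong difficulty. The recursion \eqref{EQ10a} already trades one normal derivative for one time or tangential derivative at top order.

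What actually goes wrong is that each step carries the non-small coefficient $u_1|_S$, of size $K$. So $\alpha_2$ iterations accumulate a factor $(1+K)^{\alpha_2}$. This is compatible with Gevrey bounds only if it is built consistently into both the interior and boundary norms. In addition, the nonlinear remainders must be made small.

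The paper handles this by weighting $\partial_2$ with $\epsilon_2=1/(4K(t))$ and $\partial_1,\partial_t$ with $1$. This makes the recursion contractive: $a_{\alpha+e_2}\le\frac14a_{\alpha+e_3}+\frac14a_{\alpha+e_1}+C_s\tau K^4$. The remainders carry a factor $\tau$ from the order-reduction estimate \eqref{EQ67} together with Lemma~\ref{L03}. Under $\tau\le 1/(C_sK^3)$, an induction on $\alpha_2$ then gives $b_m\le K\Rightarrow a_{m+1}\le K$, which controls $a_\alpha^2$ in the energy inequality. Since $\epsilon_2$ decreases in time, the term $\alpha_2\frac{\dot\epsilon_2}{\epsilon_2}b_\alpha^2$ has a good sign. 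The price is the powers of $K$ that $\epsilon_2^{-1}$ introduces into the nonlinear estimates.

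\textbf{Minor points.} The term $\beta=\alpha$ is not lower order when $D^\alpha$ is a pure time derivative, since \eqref{EQ70} gives no gain; it is harmless, but your claim is wrong there. You also need not derive the time derivatives at $t=0$: the paper assumes \eqref{EQ140a} for all $\alpha\in\N_0^3$.
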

Now we describe some implications of the main result. The first consequence is that the
velocity inflow-outflow problem  is ill-posed in Gevrey spaces in the following sense.

\begin{Corollary}[Non-uniqueness in Gevrey spaces of \eqref{EQ01}--\eqref{EQ03pre}]\label{cor0}
The inviscid inflow-outflow problem \eqref{EQ01}--\eqref{EQ03pre} (that is if only the normal velocity is prescribed on $\p \Omega$) is ill-posed in each  $s$-Gevrey space, where $s>1$, in the sense that for every $u_0 \in G_s$ there exist infinitely many solutions $u$ with $u(t)\in G_s$ for each $t>0$.
\end{Corollary}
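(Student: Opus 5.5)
The plan is to deduce Corollary~\ref{cor0} from Theorem~\ref{T03}. Every choice of inflow vorticity $\eta$ produces a solution of the velocity-only problem \eqref{EQ01}--\eqref{EQ03pre}, and distinct choices of $\eta$ produce distinct solutions.

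\emph{Step 1 (construction of a family).} Fix $s>1$ and $u_0$ with $\omega_0\in G_s$; assume $u_0$ is divergence free and satisfies \eqref{EQ03}. Choose a Gevrey-$s$ cutoff $\chi\colon[0,\infty)\to[0,1]$ with $\chi\equiv0$ on $[0,1]$ and $\chi\not\equiv0$; this exists precisely because $s>1$ (e.g.\ built from $e^{-1/t^{1/(s-1)}}$). Also fix any nonconstant $\phi\in G_s(\mathbb T)$. For $\lambda\in\mathbb R$ set
\[
\eta_\lambda(x_1,t)\coloneqq \omega_0(x_1,0)+\lambda\,\chi(t/\delta)\,\phi(x_1)
\]
for a parameter $\delta>0$. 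The trace $\omega_0|_S$ is $s$-Gevrey in $x_1$ by trace estimates applied to \eqref{EQ140a}. Products and compositions preserve $G_s$, so \eqref{EQ140} holds for $\eta_\lambda$ with some $N_0,N$, and at $t=0$ it is compatible with the initial datum $\omega_0$. Theorem~\ref{T03} then gives a global solution $u^\lambda$ with vorticity satisfying \eqref{EQ141}. The velocity $u[\omega]$ is recovered from \eqref{EQ14}--\eqref{EQ15}, and elliptic regularity on the strip gives $u^\lambda(t)\in G_s$ for every $t$.

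\emph{Step 2 (these are solutions of the velocity-only problem).} Each $u^\lambda$ solves \eqref{EQ01} with $u^\lambda(0)=u_0$ and $u_2^\lambda=1$ on $\partial\Omega$, which is \eqref{EQ03pre} with $\overline u=(0,1)$. The condition \eqref{EQ04} is extra information that the velocity-only formulation does not impose, so every $u^\lambda$ is admissible for Corollary~\ref{cor0}.

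\emph{Step 3 (distinctness).} The functions $u^\lambda$ are continuous in time with smooth vorticity, and the vorticity trace on $S$ is $\eta_\lambda$. For $t>\delta$ with $\chi(t/\delta)\neq0$ we have $\omega^\lambda|_S(t)=\omega_0|_S+\lambda\chi(t/\delta)\phi$, and these differ for different $\lambda$. Hence $u^\lambda(t)\neq u^{\lambda'}(t)$ for $\lambda\neq\lambda'$, which gives infinitely many, indeed a continuum of, solutions. Varying $\delta$ shows that nonuniqueness already appears on arbitrarily short time intervals.

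\emph{Main obstacle.} The delicate point is how the trace of the solution at $t=0$ interacts with $\eta$. I would avoid this by making $\eta_\lambda$ coincide with the true trace $\omega_0|_S$ near $t=0$: the flat Gevrey cutoff is identically zero there, so no compatibility conditions are introduced at the corner $S\times\{0\}$.

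A second check is that Theorem~\ref{T03} requires \eqref{EQ140} for the mixed derivatives $D^\alpha$ with $\alpha\in\mathbb N_0\times\{0\}\times\mathbb N_0$, which include time derivatives. For this I would verify $\|\partial_t^{k}\partial_1^{j}\eta_\lambda\|\le N_0N^{j+k}(j+k)!^s$ using the bound $|\chi^{(k)}|\le C\,C^k k!^s$ and the elementary inequality $j!^s k!^s\le (j+k)!^s$.
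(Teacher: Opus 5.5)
Your strategy is the paper's. Its proof of Corollary~\ref{cor0} is the one-line remark that every $\eta$ satisfying \eqref{EQ140} yields, through Theorem~\ref{T03}, a solution of the velocity-only problem. Your $\lambda$-family, with distinctness read off from the trace on $S$, makes this explicit.

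The step that fails is the base of your family. For $t\le\delta$ you take $\eta_\lambda=\omega_0|_S$ frozen in time, and you claim this introduces no compatibility conditions at the corner $S\times\{0\}$. Freezing the trace does not make $\eta$ agree with the trace of a smooth solution for $t>0$. If $\omega$ is $C^1$ up to $t=0$, then \eqref{EQ07} with $u_2=1$ on $S$ (cf.~\eqref{EQ08}) forces
\[
\partial_t\eta(\cdot,0)=-\bigl(u_1\,\partial_1\omega+\partial_2\omega\bigr)\big|_{S,\,t=0}.
\]
Analogous identities, determined recursively by \eqref{EQ07} and \eqref{EQ14}--\eqref{EQ15}, hold for all higher $t$-derivatives. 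Since $\partial_t\eta_\lambda(\cdot,0)=0$, you would need $\partial_2\omega_0=-u_1\partial_1\omega_0$ on $S$ at $t=0$, which fails for generic $\omega_0\in G_s$.

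Already for the linear transport $\omega_t+\partial_2\omega=0$, your data give $\omega=\omega_0(x_1,x_2-t)$ for $x_2>t$ and $\omega=\omega_0(x_1,0)$ just below $x_2=t$. So $\partial_2\omega$ jumps across $x_2=t$, and $\omega(t)\notin H^2$ for $0<t<1$. In the nonlinear problem the same kink is carried along the characteristic issuing from the corner. Hence your $u^\lambda$ are in general not in $G_s$ for small $t>0$. Theorem~\ref{T03}, whose conclusion includes $\omega\in C([0,\infty);H^k)$ for every $k$, cannot deliver a Gevrey solution for this data. Compatibility to all orders at the corner is a tacit hypothesis of that theorem, which the paper leaves implicit in the convention $\eta(\cdot,0)=\omega_0$ and in requiring \eqref{EQ140a} for time derivatives as well.

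The repair keeps your flat-cutoff idea but changes the base. Let $\eta_0$ be any $s$-Gevrey function on $S\times[0,\infty)$ whose $t$-derivatives at $t=0$ equal, on $S$, the formal time derivatives $\partial_t^k\omega|_{t=0}$ computed recursively from $\omega_0$ via \eqref{EQ07} and \eqref{EQ14}--\eqref{EQ15}. One checks that these obey $s$-Gevrey bounds, since each $\partial_t$ trades for one spatial derivative. For $s>1$, a Gevrey version of Borel's lemma then supplies such an $\eta_0$. Then $\eta_\lambda=\eta_0+\lambda\chi(t/\delta)\phi$ is compatible for every $\lambda$, precisely because $\chi(\cdot/\delta)$ vanishes identically near $t=0$, and your Steps~2 and~3 go through unchanged.

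A smaller point, inherited from the paper: the normalization in \eqref{EQ14} gives $\int_\Omega u_1(0)=0$, so $u^\lambda(0)=u_0$ requires $\int_\Omega u_{0,1}=0$. Otherwise the normalization should read $\int_\Omega u_1=\int_\Omega u_{0,1}+\int_0^t\int_\Omega\omega$. This only shifts $u_1$ by a constant and does not affect the estimates.
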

Indeed, Corollary~\ref{cor0} follows simply by prescribing arbitrary $\eta$ satisfying~\eqref{EQ140}. In contrast to Theorem~\ref{T03}, it is interesting that the non-uniqueness occurs already in any Gevrey space $G_s$, no matter how close $s$ is to~$1$. Hence, the term ``Gevrey instability'' in the title.

\begin{Corollary}[Analytic data gives a non-analytic solution]\label{cor1}
Suppose that $\omega_0=0$ and that, for some $T>0$,  $\eta (t)  \ne 0$ is analytic in space for each $t\in (0,T]$, i.e., it satisfies \eqref{EQ140} with $s=1$ for $t\in [0,T]$. Then the  unique  solution obtained by Theorem~\ref{T03} (in any Gevrey class) is not analytic for any $t>0$.
\end{Corollary}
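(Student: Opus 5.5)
The plan is to combine finite propagation speed for the transport equation \eqref{EQ07} with unique continuation for real-analytic functions. Let $u$ be the solution given by Theorem~\ref{T03} for some fixed $s>1$. Its vorticity is smooth, $\omega\in C([0,\infty);H^k)$ for all $k$, so $u=u[\omega]$ from \eqref{EQ14}--\eqref{EQ15} is Lipschitz in $x$ and bounded. Set $K\coloneqq \sup_{t\in[0,T]}\|u(t)\|_{L^\infty(\Omega)}<\infty$. Along particle trajectories $\dot X=u(X,t)$, equation \eqref{EQ07} says that $\omega(x,t)$ equals either $\omega_0$ at the foot of the backward characteristic through $(x,t)$, if that characteristic stays in $\Omega$ down to time $0$, or $\eta$ at the time and place where it leaves $\Omega$ backward through $S$.

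\textbf{Step 1 (backward characteristics cannot leave through $\tilde S$).} Since $u_2\equiv1$ on $\tilde S$ by \eqref{EQ03}, and $u_2$ is continuous, $u_2>0$ in a neighbourhood of $\tilde S$. Hence a backward trajectory moves strictly away from $\tilde S$, and the only way it can leave $\Omega$ is through $S$. Similarly, forward trajectories can only enter through $S$. Therefore $\omega$ is determined by $\omega_0$ and $\eta$ exactly as described above.

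\textbf{Step 2 (a vorticity-free region for short times).} Fix $t\in(0,\min\{T,1/(2K)\})$ and a point $x$ with $x_2>Kt$. Since $|\dot X_2|\le K$, the backward trajectory satisfies $X_2(\tau)\ge x_2-K(t-\tau)>0$ for $\tau\in[0,t]$. So it never reaches $S$, and by Step~1 it cannot leave through $\tilde S$ either. Consequently $\omega(x,t)=\omega_0(X(0))=0$. Thus $\omega(\cdot,t)$ vanishes on the nonempty open strip $\T\times(Kt,1)$.

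\textbf{Step 3 (unique continuation and contradiction).} Suppose $\omega(\cdot,t)$ were analytic on $\Omega$, meaning that \eqref{EQ141} holds with $s=1$. Then $\omega(\cdot,t)$ is real-analytic on the connected set $\T\times[0,1]$, with derivative bounds that are uniform up to the boundary. Since it vanishes on an open set, it vanishes identically. In particular $\eta(t)=\omega(\cdot,t)|_S=0$, which contradicts the assumption $\eta(t)\neq0$. This proves non-analyticity for $t\in(0,\min\{T,1/(2K)\})$.

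\textbf{Step 4 (longer times).} The main obstacle is to extend the conclusion to all $t\in(0,T]$, because the vorticity-free strip from Step~2 may disappear once fluid entering through $S$ has filled the channel. The first thing I would try is a more refined characteristic argument. Let $\tau_*(x,t)$ denote the entry time of the particle at $(x,t)$. Analyticity of $\omega(\cdot,t)$, together with analyticity of the flow map obtained via $u[\omega]$, would force the function $x\mapsto\omega(x,t)$ to be analytic across the interface $\{\tau_*(\cdot,t)=0\}$. That interface separates fluid coming from the initial data, where $\omega=0$, from fluid that entered through $S$, where $\omega$ equals $\eta$ evaluated along the characteristics. As in Step~3, unique continuation across this interface would force $\eta\equiv0$ on an open set of entry times near $0$. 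I would then use analyticity of $\eta$ in time, or simply the small-time argument already given, to get a contradiction.

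If the interface disappears, because all initial fluid has exited through $\tilde S$, I would instead use the time-reversal symmetry of Euler, under which $u\mapsto -u(\cdot,-t)$ interchanges $S$ and $\tilde S$. Combined with the uniqueness in Theorem~\ref{T_an}, this would propagate analyticity of $\omega(t_0)$ backward to a small time $t<1/(2K)$, where Step~3 applies. Justifying this propagation is the delicate point, since Theorem~\ref{T_an} only gives uniqueness within the analytic class.
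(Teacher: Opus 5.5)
Your Steps 1--3 are correct. Finite propagation speed gives a nonempty open strip near $\tilde S$ on which $\omega(\cdot,t)=0$, and unique continuation then contradicts $\omega|_S=\eta(t)\neq0$. So $\omega(\cdot,t)$ is not analytic at any $t\in(0,\min\{T,1/(2K)\})$.

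Step~4, however, is a genuine gap, and it cannot be closed: the pointwise-in-time claim is false once all the fluid initially in $\Omega$ has left through $\tilde S$. For example, let $g(z)=\int_0^z e^{-1/y}\,dy$ for $z>0$, $g(z)=0$ for $z\leq0$, and set $u(x,t)=(g(t-x_2),1)$, $p\equiv0$.
\begin{itemize}
\item This is an exact solution of \eqref{EQ01} with $u_2\equiv1$, $u_0=(0,1)$ and $\omega=g'(t-x_2)$.
\item Hence $\omega_0=0$ and $\eta(x_1,t)=e^{-1/t}\neq0$ for $t>0$. This $\eta$ is constant in $x_1$ and Gevrey of order $2$ in $t$; for $s<2$ use $e^{-y^{-a}}$ with $a\geq 1/(s-1)$ instead.
\item One checks that $\int_\Omega u_1=\int_0^t\int_\Omega\omega$ and that $u$ solves \eqref{EQ14}--\eqref{EQ15}. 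By uniqueness, this is the solution of Theorem~\ref{T03}.
\item Yet for every $t>1$, $\omega(\cdot,t)=e^{-1/(t-x_2)}$ with $t-x_2\geq t-1>0$ on $\T\times[0,1]$, which is real-analytic up to the boundary.
\end{itemize}

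Accordingly, your interface argument works exactly as long as some initial fluid remains in $\Omega$, and no longer. While it remains, the set of points whose backward characteristic reaches $t=0$ inside $\Omega$ is relatively open and carries $\omega=0$. Your time-reversal idea fails because here the analyticity radius of $\omega(\cdot,t)$ shrinks to $0$ as $t\downarrow1$. Theorem~\ref{T_an} only gives uniqueness among solutions that stay analytic on the whole time interval, so nothing propagates back past the flushing time. Also, analyticity of $\eta$ in $t$ is not part of the hypothesis. It could not hold up to $t=0$ anyway, since compatibility with $\omega_0=0$ forces $\partial_t^k\eta(\cdot,0)=0$ for all $k$.

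The paper's own argument is a one-liner using the uniqueness part of Theorem~\ref{T_an}. With $\omega_0=0$, the only analytic solution is $u\equiv(0,1)$, whose vorticity vanishes on $S$, whereas the Gevrey solution has $\omega|_S=\eta(t)\neq0$ for each $t>0$. What this actually rules out is analyticity of the Gevrey solution on any interval $[0,t_0]$ with $t_0>0$; that is the content the corollary can carry.

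Your Steps 1--3 already prove this, and in a stronger, pointwise form for small times. They do so by a different and more elementary route: characteristics plus unique continuation, with no appeal to the analytic well-posedness theory. So drop Step~4 and state the conclusion in that form.
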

Indeed, by Theorem~\ref{T_an}, the only analytic solution is $u=(0,1)$, which differs, at each $t>0$, from the solution given by Theorem~\ref{T03} (as nonzero $\eta(t)$ is being injected into $\Omega$ at each time $t>0$). 
%\textbf{\colr QX: why do we have $u=0$? I thought we have $u=(0,1)$?}

\section{Proof of Theorem~\ref{T03}}\label{sec_sketch}
Here we prove Theorem~\ref{T03} modulo some claims which we verify in Sections~\ref{sec_vel_ests}--\ref{sec_nonlin}.  For brevity, we use the notation
\[
\| \cdot \|_{p} \coloneqq \| \cdot \|_{L^p (\Omega )}, \qquad \| \cdot \| \coloneqq \| \cdot \|_{2}
\]
throughout the paper.\\

\noindent\texttt{Step 1.}  We define the boundary  and interior Gevrey coefficients.\\

Namely, given   $\epsilon_1,\epsilon_2,\epsilon_3 \in(0,1]$, the initial Gevrey radius $\tau_0\in (0,1]$, a decreasing Gevrey radius function $\tau \colon [0,\infty ) \to (0,1]$ with $\tau(0)=\tau_0$, $\alpha \in \N_0^3$, and $m\in \N_0$, we  define by 
 \eqnb
   a_\alpha (t)\coloneqq \frac{\epsilon^\alpha}{|\alpha|!^s}\tau^{(|\alpha|-2)_+}\|D^\alpha\omega\|_{L^2(S)}
   \label{EQ43}
 \eqne 
 the Gevrey boundary coefficients, and by 
 \begin{align}
    b_\alpha (t) &\coloneqq \frac{\epsilon^\alpha}{|\alpha|!^s}\tau^{(|\alpha|-2)_+}\|D^\alpha\omega\|
   \label{EQ44}
    \end{align}
 the interior Gevrey coefficients of~$\omega$. Here we used the short-hand notation 
 \[ \epsilon \coloneqq (\epsilon_1,\epsilon_2,\epsilon_3) ,\qquad \epsilon^{\alpha}\coloneqq \epsilon_1^{\alpha_1}\epsilon_2^{\alpha_2}\epsilon_3^{\alpha_3} \quad \text{ for }\alpha\in\mathbb{N}_0^{3}.\]

 We will also set
\eqnb
a_m (t) \coloneqq \max_{|\alpha | \leq m } a_\alpha (t), \qquad b_m (t) \coloneqq \max_{|\alpha | \leq m } b_\alpha (t)
\eqne  
for $m\in \N_0$ and $t\geq 0$.\\  

We emphasize  that the boundary coefficients $a_\alpha$, which are motivated by the a~priori estimate \eqref{EQ18}, are evaluated only on the lower boundary $S$, namely the one on which the incoming vorticity $\eta$ is prescribed (recall~\eqref{EQ04}). \smallskip\\

\noindent\texttt{Step 2.} In Section~\ref{sec_vel_ests}, we show  the
order-reduction estimates for $u$, which read
 \begin{align}
    \|D^{(\alpha_1,0,\alpha_3)}u\|
    &\leq
    2\|D^{(\alpha_1-1,0,\alpha_3)}\omega\|\comma \alpha_1-1,\alpha_3\geq 0,
    \label{EQ47}
    \\
    \|D^{(\alpha_1,\alpha_2,\alpha_3)}u\|
    &\leq
    2\sum_{k=1}^{\alpha_2}\|D^{(\alpha_1+k-1,\alpha_2-k,\alpha_3)}\omega\|,
    \quad\alpha_1,\alpha_2- 1,\alpha_3\geq 0.
    \label{EQ48}
    \end{align}
We note in passing that the case of $\alpha_1=\alpha_2=0$ can be treated directly. Indeed, the definition \eqref{EQ14}--\eqref{EQ15} of $u[\omega ]$ and the Biot-Savart inequality in $\dot H^1$ (see \eqref{Hkdot} below) gives 
\eqnb\label{EQ70}
\| \p_t^m u \| \lec \| \p_t^m \omega \|
    \comma m\geq 1.
\eqne
Note that there is no dependence of constants on~$m$ in the last inequality. Moreover, if $D^\alpha$ includes at least one spatial derivative, we have a better estimate. Indeed,  multiplying \eqref{EQ47} and \eqref{EQ48} by $\epsilon^\alpha \tau^{|\alpha | -2} |\alpha |!^{-s}$ gives 
\[
    \begin{split}
        \frac{\epsilon^\alpha\tau^{|\alpha|-2}}{|\alpha|!^s}\|D^\alpha u\|
        &\leq
        \sum_{k=1}^{\alpha_2}\frac{\epsilon^{\alpha+(k-1)e_1-ke_2}\tau^{|\alpha|-1-2}}{(|\alpha|-1)!^s}
        \|D^{(\alpha_1+k-1,\alpha_2-k,\alpha_3)}\omega\|
        \left(\frac{\epsilon_2}{\epsilon_1}\right)^k\epsilon_1\tau|\alpha|^{-s}
        \\&
       \leq
        \sum_{k=1}^{\alpha_2}\left(\frac{\epsilon_2}{\epsilon_1}\right)^k\epsilon_1\tau|\alpha|^{-s}
        \max\limits_{|\beta|\leq|\alpha|-1}b_\beta
    \end{split}
   \]
for $\alpha_1+\alpha_2\geq1$ and $|\alpha|\geq 3$, which, recalling that $\epsilon_2\leq \epsilon_1/2$, gives a very useful inequality,
\begin{align}
    \begin{split}
        \frac{\epsilon^\alpha\tau^{|\alpha|-2}}{|\alpha|!^s}\|D^\alpha u\|
        &
        \lec
	\frac{\tau}{|\alpha|^{s}}
	\max\limits_{|\beta|\leq|\alpha|-1}b_\beta
    \end{split}
    \label{EQ67}
\end{align}
for $\alpha_1+\alpha_2\geq1$ and $|\alpha|\geq 3$.\\

\noindent\texttt{Step 3.} We prove local well-posedness in Sobolev spaces.\\

Namely, in Section~\ref{sec_sobolev} we show the following. 
\cole
\begin{Theorem}
\label{T01}
Let $k\in\{2,3,\ldots\}$, and suppose that $\eta$ satisfies \eqref{EQ140} for all $\alpha$ such that $| \alpha | \leq k$. Then there exists a unique solution $\omega \in C ([0,\infty ); H^k (\Omega ))$ to \eqref{EQ07} with $u=u[\omega ]$ and $\| \omega \|_{H^k} \leq J_k (t)$ for all $t\geq 0$. 
\end{Theorem}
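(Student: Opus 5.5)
We will construct the solution by an iteration scheme on the linear transport problem and close it with a priori $H^k$ estimates that are linear in the highest norm. This linearity is what makes the result global.

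\emph{Step 1: the linear problem.} Given a divergence-free velocity $v$ with $v_2=1$ on $S\cup\tilde S$ and $v\in L^\infty_t W^{1,\infty}\cap L^\infty_t H^k$, consider
\[
\omega_t+v\cdot\nabla\omega=0,\qquad \omega|_S=\eta,\qquad \omega(0)=\omega_0 .
\]
Since $v_2=1>0$ on $S$ (inflow) and on $\tilde S$ (outflow), the characteristics enter through $S$ and leave through $\tilde S$. Hence the problem is solvable by characteristics, each point being traced back either to $\{t=0\}$ or to $S$. We then set $\omega^{(n+1)}$ to be the solution with $v=u[\omega^{(n)}]$.

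\emph{Step 2: a priori estimates.} Apply $D^\alpha$ with $|\alpha|\le k$, taking spatial and temporal derivatives as in the multiindex convention, and use \eqref{EQ18}. The boundary term on $\tilde S$ has a good sign. The boundary term on $S$ is controlled as follows:
\begin{itemize}
\item for tangential or time derivatives, $D^\alpha\omega|_S=D^\alpha\eta$, which is bounded by $N_{|\alpha|}$ through \eqref{EQ140};
\item for normal derivatives, we use the equation on $S$, namely $\p_2\omega=-(\p_t\omega+u_1\p_1\omega)$ there since $u_2=1$. Iterating this expresses $\p_2^j D^\beta\omega|_S$ through tangential and time derivatives of $\eta$ and traces of lower-order derivatives of $u$. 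The latter are bounded by $\|u\|_{H^{k}}\lec \|\omega\|_{H^{k-1}}+J_k$ via the trace theorem and \eqref{EQ47}--\eqref{EQ48}.
\end{itemize}
For the commutator $D^\alpha(u\cdot\nabla\omega)-u\cdot\nabla D^\alpha\omega$, the Kato--Ponce type bound gives
\[
\lec \|\nabla u\|_\infty\|\omega\|_{H^k}+\|\nabla\omega\|_\infty\|u\|_{H^k}.
\]
Using the order-reduction estimates, $\|u\|_{H^k}\lec\|\omega\|_{H^{k-1}}+|\int_\Omega u_1|$, and the mean of $u_1$ is controlled by \eqref{EQ63}. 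This yields
\[
\frac{\d}{\d t}\|\omega\|_{H^k}^2\le C(\|\omega\|_{W^{1,\infty}}+1)\|\omega\|_{H^k}^2+J_k .
\]

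\emph{Step 3: the low-order quantity, which is the main obstacle.} To make the result global, $\|\omega\|_{W^{1,\infty}}$ or a log-Lipschitz substitute must be bounded independently of $\|\omega\|_{H^k}$. Two facts do this. First, $\|\omega\|_{L^\infty}\le\max(\|\omega_0\|_\infty,\sup\|\eta\|_\infty)$ by the maximum principle along characteristics. Second, $\nabla u$ satisfies a Brezis--Gallouet/Beale--Kato--Majda inequality,
\[
\|\nabla u\|_\infty\lec 1+\|\omega\|_\infty\bigl(1+\log(1+\|\omega\|_{H^2})\bigr),
\]
valid for the elliptic problems \eqref{EQ14}--\eqref{EQ15} in the channel. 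In the $H^2$ estimate the commutator involves $\|\nabla u\|_\infty\|\omega\|_{H^2}$ and $\|\nabla\omega\|_{L^4}\|\nabla^2u\|_{L^4}\lec\|\nabla u\|_\infty$-type terms. This gives a Gronwall inequality of the form $y'\lec(1+\log y)y+J_2$, which has double-exponential but finite bounds for all $t$. Once $\|\omega\|_{H^2}$ is bounded, $\|\nabla u\|_\infty$ and, by Sobolev embedding for $k\ge3$, $\|\nabla\omega\|_\infty\lec\|\omega\|_{H^3}$ are handled inductively. The estimate for $H^k$ is linear in $\|\omega\|_{H^k}$ with coefficients depending on $\|\omega\|_{H^{k-1}}$, so induction on $k$ gives $\|\omega\|_{H^k}\le J_k(t)$.

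\emph{Step 4: convergence and uniqueness.} Contraction of the iterates holds in $L^2$ on a short interval. The difference $\delta=\omega^{(n+1)}-\omega^{(n)}$ has zero boundary data on $S$, so the inflow boundary term vanishes. The remaining term $\|u[\delta^{(n)}]\|\,\|\nabla\omega^{(n)}\|_\infty$ is controlled using \eqref{EQ70}-type Biot--Savart bounds, with the mean of $u_1$ handled by \eqref{EQ63}. Uniform $H^k$ bounds plus interpolation then give convergence in $C([0,T];H^{k'})$ for $k'<k$, and weak-star limits give $H^k$ regularity. Continuity in time follows by the standard renormalization argument. The same difference estimate gives uniqueness. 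The local solutions are then extended to $[0,\infty)$ using the global bound from Step 3.
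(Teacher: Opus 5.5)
Your architecture is essentially the paper's: Picard iteration for the linear transport problem, the maximum principle for $\|\omega\|_\infty$, derivative reduction $\partial_2\omega=-\eta_t-u_1\partial_1\eta$ on $S$, the good sign on $\tilde S$, a Brezis--Gallouet bound, and induction on $k$. The difference is at the decisive $H^2$ level. The paper first proves a separate $L^4$ energy estimate for $\nabla\omega$, see \eqref{W14_est}. There the outflow term again has a good sign, and the inflow term $\int_S|\nabla\omega|^4$ is handled by first-order derivative reduction. Together with \eqref{logW14}, which is stated in terms of $\|\nabla\omega\|_4$, this gives $\|u^{(n)}\|_{W^{1,\infty}}\le J_2$ uniformly in $n$. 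After that every $H^k$ inequality is linear; at $k=2$ the dangerous term $\|D^2u\|_4\|\nabla\omega\|_4$ is simply bounded by $J_2$.

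You instead run BKM directly on $y=\|\omega\|_{H^2}^2$ via \eqref{logHk}. This hinges on your one-line claim that $\|\nabla\omega\|_4\|\nabla^2u\|_4$ is ``of $\|\nabla u\|_\infty$-type''. That is exactly the step the paper's Remark in Section~\ref{sec_sobolev} rejects: integrating by parts on $\Omega$ leaves $\int_{\tilde S}\omega(\partial_2\omega)^3$, and nothing is known about it at the outflow. As written, this is a gap, but it can be filled. The homogeneous inequality $\|\nabla f\|_4^2\lesssim\|f\|_\infty\|D^2f\|$ does fail on $\Omega$ (take $f=x_2$). However, the inhomogeneous version $\|\nabla f\|_{L^4(\Omega)}^2\lesssim\|f\|_{L^\infty(\Omega)}\|f\|_{H^2(\Omega)}$ holds. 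To prove it, extend $f$ by a higher-order reflection across $S$ and $\tilde S$, which is bounded simultaneously on $L^\infty$ and $H^2$. Then cut off in $x_2$ and integrate by parts on $\mathbb{T}\times\mathbb{R}$, where no boundary terms arise. Combined with $\|D^2u\|_4\lesssim\|\omega\|_\infty+\|\nabla\omega\|_4$ from \eqref{EQL05}, this gives $\|D^2u\|_4\|\nabla\omega\|_4\lesssim J_2(1+\|\omega\|_{H^2})$. Your inequality $y'\lesssim(1+\log y)y+J_2$ then follows, uniformly along the iteration by comparison.

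You must still check that the $H^2$-level inflow terms are at most linear in $y$. Products such as $u_1^2\partial_1^2\eta$ and $\partial_tu_1\,\partial_1\eta$ in $\partial_2^2\omega|_S$ are fine, since $\|\omega\|_{H^1}^4\lesssim\|\omega\|^2\|\omega\|_{H^2}^2$. Time-derivative norms should be recovered from the PDE, as the paper does for $\alpha_3>0$, rather than by energy estimates. The trade-off is this: your route is classical and avoids the auxiliary $W^{1,4}$ estimate, but it needs the extension-based interpolation and carries log factors through the iteration. The paper's route gets $W^{1,\infty}$ control of $u^{(n)}$ early and makes everything afterwards linear, including the trace induction \eqref{L2Sa}.

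There are two smaller issues in your Step 4.
\begin{enumerate}
\item For $k=2$ you have no bound on $\|\nabla\omega^{(n)}\|_\infty$. The $L^2$ difference estimate should instead use $\|u[\delta^{(n)}]\|_4\|\nabla\omega^{(n)}\|_4$, together with $\|u[\delta^{(n)}]\|_{H^1}\lesssim\|\delta^{(n)}\|+\int_0^t\|\delta^{(n)}\|$.
\item ``Continuity in time by renormalization'' is only a label, and it has to be carried out with the inflow boundary term present. The paper instead gets convergence directly in $C([0,t];H^k)$ from $H^k$ difference estimates, at the cost of constants $J_{k+1}$ coming from the loss in $\tilde u^{(n+1)}\cdot\nabla\omega^{(n+1)}$. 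Your weak-star-plus-continuity approach would respect the hypothesis of only $k$ derivatives on $\eta$, but it must actually be done.
\end{enumerate}
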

\colb

%The theorem is established in Section~\ref{sec_sobolev} below.
Using Theorem~\ref{T01}
with $k=3$, we obtain that there exists a continuous increasing function  $K(t)=J_2(t)$ such that $K(t) \geq 1+ N_0 (t) N(t)^2$, 
\eqnb\label{EQ171}
a_2 (t) , b_2 (t) \leq K(t) 
,
\eqne
and 
\eqnb \label{EQ171a}
\| u [\omega ] \|_{\infty}, \| u [\p_t \omega ] \|_{\infty}, \| \omega \|_{H^2} \leq K(t) 
\eqne
for all $t\geq 0$. We fix such~$K(t)$. In the following we will write $K \equiv K(t)$, for brevity. 
\smallskip\\
We note that, as compared to Step~3, the claim of Theorem~\ref{T03} requires much more careful control of the growth of the $L^2$ norms of derivatives of~$\omega$. Indeed, for the global well-posedness in $H^k$ we use (in Section~\ref{sec_sobolev}) rather crude estimates of the form $\| D^\alpha \omega \| \leq J_k$ (see \eqref{L2Sb}), where $J_k(t)$ is an unquantified  upper bound depending on the first $k$ derivatives of~$\eta$. Thus, in the following steps, we obtain a more precise control on the coefficients $a_k$ and~$b_k$, which allows us to control the Gevrey norm of~$\omega$.\\

\noindent\texttt{Step~4.} We observe that for $|\alpha | \geq 3$ we can also estimate $a_{\alpha }$ in the case when $\alpha_2 =0$, provided that
\eqnb\label{tau_smallness}
\tau(t) \leq \frac{1}{N(t)}
    \comma t\geq 0.
\eqne
Namely, given the condition \eqref{tau_smallness},  the assumption \eqref{EQ140} implies
\eqnb\label{a_K_bound}
a_\alpha (t) \leq K(t)
%\qquad \text{ for all }
    \comma \alpha\in \N_0 \times \{ 0 \} \times \N_0
    \commaone t\geq 0.
\eqne
\smallskip\\
For controlling the quantities $a_{\alpha}$ and $b_\alpha$'s in full generality, we first fix  $\epsilon_1,\epsilon_3 \coloneqq 1$ and  
\eqnb\label{eps_rel}
\epsilon_2 (t) \coloneqq \frac{1}{4K(t)}.
\eqne
\smallskip\\

\noindent\texttt{Step~5.} We show in  Section~\ref{sec_bdry_coeffs} that 
\eqnb\label{bdry_coefs}
b_m (t) \leq K (t) \qquad \text{ implies } \qquad a_{m+1}(t) \leq K (t)
\eqne
for each $t\geq 0$ such that
\eqnb\label{tau_small1}
\tau (t) \leq \frac{1}{C_s  K(t)^3},
\eqne
where $C_s>0$ is a universal constant (defined in \eqref{Cs_def}), which depends only on the Gevrey exponent $s>1$.\\
In order to estimate the time evolution of the coefficients~$b_m$, we use~\eqref{EQ18} to obtain
  \begin{align}
  \begin{split}
  \frac12 \frac{\d}{\d t} b_\alpha^2 &= \frac12 \frac{\d}{\d t}
  \left(
    \frac{\epsilon^{2\alpha}}{|\alpha|!^{2s}}
    \tau^{2|\alpha|-4}
    \int (D^{\alpha} \omega)^2
  \right)
  \\
  &=\underbrace{\alpha_2 \frac{\dot \epsilon_2}{\epsilon_2} b_{\alpha }^2 }_{\leq 0} + (|\alpha|-2) \frac{\dot \tau}{\tau } b_\alpha^2 +  \frac{\epsilon^{2\alpha}}{|\alpha|!^{2s}}
    \tau^{2|\alpha|-4}
  \frac{\d }{\d t}  \int (D^{\alpha} \omega)^2
  \\&
  \leq
 \frac{|\alpha|}4 \frac{\dot \tau}{\tau } b_\alpha^2  + \underbrace{ \frac{\epsilon^{2\alpha}}{|\alpha|!^{2s}}
  \tau^{2|\alpha|-4}
  \int_{S} (D^{\alpha}\omega)^2}_{=a_\alpha^2}
  -
  \frac{\epsilon^{2\alpha}}{|\alpha|!^{2s}}
  \tau^{2|\alpha|-4}
  \int \bigl(
          D^{\alpha}(u\cdot \nabla \omega)
	  - u \cdot \nabla D^{\alpha}\omega 
        \bigr)
	D^{\alpha}\omega \\
	&=:  \frac{|\alpha|}4  \frac{\dot \tau}{\tau } b_\alpha^2  + a_\alpha^2 + \NL (\alpha )
  \end{split}
   \label{EQ19}
  \end{align}
for each $\alpha$ such that $|\alpha |\geq 3$.
\smallskip\\

\noindent\texttt{Step~6.} We show in Section~\ref{sec_nonlin} that there exists a constant $\widetilde{C}\geq 1$ such that if $b_m(t) \leq K(t)$ then
\eqnb\label{nonlin_est}
\NL (\alpha ) \leq \widetilde{C} m  K(t)^4 b_{m+1} (t)^2
\eqne
for each $t>0$, $m\geq 2$, and all multiindices $\alpha$ such that $| \alpha |= m+1$.\smallskip\\

We now choose the time dependence of the Gevrey radius as 
\eqnb\label{tau_def}
\tau (t) \coloneqq \tau (0) \ee^{-8\widetilde{C}\int_0^t  K^{6} (s) \d s}
\eqne
with $\tau (0) \in (0,1]$ sufficiently small so that \eqref{tau_smallness} and \eqref{tau_small1} hold for all $t\geq 0$. \smallskip \\

\noindent\texttt{Step~7.}  We show that $b_m (t)\leq K(t)$ for all $m$ and $t\geq 0$.\\

Indeed,
for $m\leq 2$, the claim follows from \eqref{EQ171}.
For $m \geq 3$ we proceed by induction: First, $b_{m+1}(0)< K$ by assumption \eqref{EQ140a}, since $\tau(0)\leq 1$. Moreover, $b_{m+1} (t)$ is continuously differentiable (a consequence of Theorem~\ref{T01}). Suppose that $b_{m+1} (t) > K(t)$ for some $m\geq 2$, $t>0$, and let 
\[
t_0 \coloneqq \inf \bigl\{ t>0 \colon b_{m+1} (t) > K(t) \bigr\}.
\]
 Let $\alpha$ be any multiindex such that $|\alpha |= m+1$. Applying Steps 5 and 6 in \eqref{EQ19}, we obtain
\begin{align}
\begin{split}
 \frac{\d}{\d t}b_\alpha^2
    &\leq \frac{m}2\frac{\dot\tau}{\tau}b_\alpha^2 + 2 K^2 + 2 \widetilde{C} m K^4 b_{m+1}^2 ,
        \end{split}
\end{align}
where we omitted ``$t$'' in the notation. Recalling \eqref{tau_def}, we have $\dot \tau/\tau \geq - 8 \widetilde{C}  K^6  $, so that, at $t_0$,
\begin{align}
\begin{split}
 \frac{\d}{\d t}b_\alpha^2 (t_0)
    &\leq - 4\widetilde{C} m  K^6 (t_0) + 2 K^2(t_0) + 2\widetilde{C} m  K^6(t_0)
    \leq  -\widetilde{C} m K^6  (t_0) <0 .
    \end{split}
\end{align}
Thus, for each $\alpha$ with $|\alpha |=m+1$, $b_\alpha (t)$ decreases on some time interval following~$t_0$. This contradicts the definition of $t_0$, and so the claim follows. \\

\noindent\texttt{Step 8.} We conclude the proof of Theorem~\ref{T03}.\\

From Step~3 we know that there exists a unique global-in-time solution $\omega \in C([0,\infty );H^k )$ for each $k\geq 0$. By Step~7 we also have that $\omega (t)$ remains Gevrey for all  $t>0$, and the claim~\eqref{EQ141} follows by writing
\[
\| D^\alpha \omega \| = \frac{|\alpha |!^s }{\epsilon_2 (t)^{\alpha_2} \tau(t)^{(|\alpha |-2)_+}} b_\alpha (t) \leq K(t)^{1+|\alpha | } \tau (t)^{-|\alpha |} |\alpha |!^s
\]
and setting $L_0(t) \coloneqq K(t)$, $L(t) \coloneqq K(t) \tau(t)^{-1}$.\bigskip\\

The paper is structured as follows. After introducing some preliminary concepts and combinatorial estimates (for $s>1$) in Section~\ref{sec_prelim}, we discuss velocity estimates in Section~\ref{sec_vel_ests}, and we prove global well-posedness in Sobolev spaces (Theorem~\ref{T01}) in Section~\ref{sec_sobolev}. We then prove \eqref{bdry_coefs} in Section~\ref{sec_bdry_coeffs} and \eqref{nonlin_est} in Section~\ref{sec_nonlin}.

\section{Preliminaries}\label{sec_prelim}

We denote by $C(t)$ a generic function of time, which can change from line to line, and similarly we denote by $C_k (t)$ any such constant which may depend on a parameter~$k$.

\begin{Lemma}[Binomial inequalities]
\label{L03}
Let $s>1$.
%For $n\in\mathbb{N}$, the following inequalities hold
Then we have the inequalities
    \begin{eqnarray}
      \sum_{k=1}^{n-1}
        \binom{n}{k}^{1-s}
        \frac{(k+1)^s(n-k+1)^s}{n}&\lec_s 1
	,\label{EQ29} \\
	 \sum_{k=0}^n\binom{n}{k}\binom{n+1}{k}^{-s}
	&\lec_s 1
	\label{EQ38}        
    \end{eqnarray}
for $n\in \mathbb{N}$.
\end{Lemma}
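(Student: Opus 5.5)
We prove both inequalities by symmetrization and an elementary splitting of the summation range into the part near the endpoints ($k$ or $n-k$ bounded) and the middle part, where the binomial coefficients are exponentially large and the negative power $1-s<0$ (resp.\ $-s$) gives decay. Throughout we use the two elementary facts
\[
\binom{n}{k}\geq \binom{n}{j}\ \ (j\le k\le n/2),\qquad \binom{n}{k}\geq \frac{n^k}{k^k}\ \text{ and in particular }\ \binom{n}{k}\ge \binom{n}{2}\ge \frac{n^2}{4}\ \ (2\le k\le n-2,\ n\ge 4).
\]
Small $n$ (say $n\le 4$) contribute finitely many bounded terms, so we may assume $n$ large.

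For \eqref{EQ29}, by the symmetry $k\mapsto n-k$ it suffices to sum over $1\le k\le n/2$, where $(n-k+1)^s\le (n+1)^s$ and $(k+1)^s\le 2^s k^s$. The term $k=1$ gives $n^{1-s}\cdot 2^s n^s/n\lec_s 1$. For $2\le k\le n/2$ we bound $\binom{n}{k}\ge (n/k)^k$, so each term is at most $C_s\, (k/n)^{k(s-1)} k^s n^{s-1}$. For $k=2$ this is $\lec n^{-2(s-1)}\cdot n^{s-1}=n^{1-s}$; for general $k\le n/2$, since $k/n\le 1/2$, we estimate $(k/n)^{k(s-1)}n^{s-1}k^s \le (k/n)^{(k-1)(s-1)}k^{s-1}k^s\le 2^{-(k-3)(s-1)}(k/n)^{2(s-1)}k^{2s-1}$ for $k\ge 3$, and summing the geometric-type tail $\sum_k 2^{-(k-3)(s-1)}k^{2s-1}\lec_s 1$. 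Hence the whole sum is $\lec_s 1$. The only delicate point is the bookkeeping showing the powers of $n$ cancel for the first few $k$; the middle range is harmless because of the exponential decay.

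For \eqref{EQ38}, we write $\binom{n}{k}\binom{n+1}{k}^{-s}=\binom{n}{k}^{1-s}\bigl(\binom{n}{k}/\binom{n+1}{k}\bigr)^{s}$ and use $\binom{n}{k}/\binom{n+1}{k}=\frac{n+1-k}{n+1}\le 1$, so the sum is bounded by $\sum_{k=0}^{n}\binom{n}{k}^{1-s}$. The terms $k=0$ and $k=n$ equal at most $1$; the terms $k=1,n-1$ give $2n^{1-s}\le 2$; and for $2\le k\le n-2$, with symmetry and $\binom{n}{k}\ge (n/k)^k\ge 2^k$ for $k\le n/2$, the remaining sum is at most $2\sum_{k\ge 2} \min\{ (n^2/4)^{1-s}, 2^{-k(s-1)}\}$. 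Splitting at $k\approx \log n$ gives $\lec_s (\log n) n^{2(1-s)} + \sum_{k\ge \log n}2^{-k(s-1)}\lec_s 1$. Thus \eqref{EQ38} holds.

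I expect the main obstacle to be only the uniformity in $n$ of the estimate for \eqref{EQ29} near the endpoints, where the factor $(n-k+1)^s/n\sim n^{s-1}$ must be exactly compensated by $\binom{n}{k}^{1-s}$; this is handled by treating $k=1,2$ explicitly and using the bound $\binom nk\ge (n/k)^k$ with at least two factors of $n^{-(s-1)}$ available once $k\ge 2$.
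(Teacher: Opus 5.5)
Your proof is correct and follows the paper's route. For \eqref{EQ29} you symmetrize, use $\binom nk\ge (n/k)^k$ and $k/n\le 1/2$ to get a summable bound of the form $C_s2^{-k(s-1)}k^{2s-1}$; this bound already covers $k=1,2$, so treating them separately is unnecessary. For \eqref{EQ38} you use the same reduction $\binom nk\binom{n+1}k^{-s}\le\binom nk^{1-s}$. The only cosmetic differences are that the paper lower-bounds $\binom nk\ge\binom{2k}k\ge 4^k/(2k+1)$ where you use $\binom nk\ge 2^k$, and that your splitting at $k\approx\log n$ is superfluous because $\sum_k 2^{-k(s-1)}$ already converges.
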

\colb
Note that, since $n \leq (n+1)^s$, the inequality \eqref{EQ29}  also implies
 \eqnb\label{EQ30}
 \begin{split}
        \sum_{k=0}^{n}
        \binom{n}{k}^{1-s}
        \frac{(k+1)^s(n-k+1)^s}{(n+1)^s} &= 2+ \sum_{k=1}^{n-1}
        \binom{n}{k}^{1-s}
        \frac{(k+1)^s(n-k+1)^s}{(n+1)^s}\\        
	&\leq 2+ \sum_{k=1}^{n-1}
        \binom{n}{k}^{1-s}
        \frac{(k+1)^s(n-k+1)^s}{n}
	\lec_s 1.
        \end{split}
        \eqne

\begin{proof}[Proof of Lemma~\ref{L03}]
For \eqref{EQ29}, denote by
\begin{align}
    \begin{split}
        T_{n,k}&\coloneqq \frac{1}{n}\binom{n}{k}^{1-s}(k+1)^s(n-k+1)^s
       \end{split}
\end{align}
the summand  on the left side of~\eqref{EQ29}.
Since $T_{n,k}=T_{n,n-k}$,
we only need to estimate the sum
from $k=1$ to $\lfloor n/2\rfloor$ and assume
  \begin{equation}
   2k\leq n
   .
   \label{EQ32}
  \end{equation}
First, since 
$k\leq n-1$, we have  $
    (n-j)k\geq nk-jn=n(k-j)
$ for every $j\in \{ 0,\ldots , k-1 \}$, and so
\begin{align}
    \frac{n-j}{k-j}\geq\frac{n}{k}
  \end{align}
for such~$j$. Therefore, 
\[
    \binom{n}{k}=\prod_{j=0}^{k-1}\frac{n-j}{k-j}\geq\left(\frac{n}{k}\right)^k,
   \]
and thus
\begin{align}
  \begin{split}
    T_{n,k}
    &\leq\frac{1}{n}\left(\frac{n}{k}\right)^{k(1-s)}(k+1)^s(n-k+1)^s
    \leq
    \frac{1}{n}\left(\frac{n}{k}\right)^{k(1-s)}(2k)^sn^s
    \\&
    =2^s n^{(k-1)(1-s)}k^{-k(1-s)+s}\\
    &\leq 2^s (2k)^{(k-1)(1-s)}k^{-k(1-s)+s}
     =
     2^{s+(k-1)(1-s)}
     k^{2s-1}
  \end{split}
    \label{EQ35}
\end{align}
for each $k\in \{ 1, \ldots , \lfloor n/2 \rfloor \}$, where we used \eqref{EQ32} in the last inequality. Hence,
\begin{align}
    \sum_{k=1}^{\lfloor n/2\rfloor}T_{n,k}
    \leq
    \sum_{k=1}^{\lfloor n/2\rfloor}
    2^{s+(k-1)(1-s)}
    k^{2s-1}
    \leq
    \sum_{k=1}^{\infty}
    2^{s+(k-1)(1-s)}
    k^{2s-1}
    \lec_s 1,
\end{align}
where the last inequality holds 
since
$s+(k-1)(1-s)<0$ provided $k$ is
larger than a constant depending on~$s$ only.\\

 For \eqref{EQ38} we first observe that 
\[
    \sum_{k=0}^n\binom{n}{k}\binom{n+1}{k}^{-s}
=\sum_{k=0}^n\binom{n}{k}^{1-s}\left(\frac{n+1}{n-k+1}\right)^{-s}
  \leq \sum_{k=0}^n \binom{n}{k}^{1-s}
  = 2+\sum_{k=1}^{n-1}\binom{n}{k}^{1-s}.
    \]
Using the symmetry of the combinatorial coefficients, we obtain
\begin{align}
    \sum_{k=1}^{n-1}\binom{n}{k}^{1-s}
    \leq 2\sum_{k=1}^{\left\lfloor \frac{n}{2} \right\rfloor}\binom{n}{k}^{1-s}
    \leq 2\sum_{k=1}^{\left\lfloor \frac{n}{2} \right\rfloor}\binom{2k}{k}^{1-s}
    \leq 2\sum_{k=1}^\infty\binom{2k}{k}^{1-s}.
    \label{EQ40}
\end{align}
Since $\binom{2k}{k}
    =\max_{i}\binom{2k}{i}$ for each $k\geq 1$, we have 
\begin{align*}
    \binom{2k}{k}
    \geq\frac{1}{2k+1}\sum_{i=0}^{2k}\binom{2k}{i}
    =\frac{4^k}{2k+1}
    \geq\frac{4^{k-1}}{k}
    ,
\end{align*}
so that 
\begin{align}
    \sum_{k=1}^\infty\binom{2k}{k}^{1-s}
    \leq 2+2\sum_{k=1}^\infty \left(\frac{4^{k-1}}{k}\right)^{1-s}\lec_s 1. 
   \end{align}
Applying this inequality in  \eqref{EQ40} concludes the proof. 
\end{proof}

We will often use the  combinatorial identity 
\begin{align} \sum_{0\leq\beta\leq\alpha,|\beta|=k}\binom{\alpha}{\beta}=\binom{m}{k}
    \label{EQ77}
\end{align}
for any multiindex $\alpha = (\alpha_1 , \ldots , \alpha_m)$.
It follows by observing that both sides of the equality are equal to the number of possible choices of a $k$-element subset of an $m$-element set.

Finally, we recall from 
\cite[Lemma~2.3]{KNV}
the periodic De~Rham's theorem.

\cole
\begin{Lemma}[Periodic De Rham's theorem]
\label{L05}
Assume that $v \in L^2_{\mathrm{loc}}(\mathbb{R} \times [0,1])$
is $1$-periodic in the $x$ variable, and suppose that it satisfies
  \begin{align}
   \nabla^\perp \cdot v = 0
   \andand
   \int_{[0,1]\times[0,1]} v^1 = 0
   .
   \label{EQ200}
  \end{align}
Then there exists a function $q \in H^1_{\mathrm{loc}}(\mathbb{R} \times [0,1])$, which is $1$-periodic in the $x$ variable and satisfies
\begin{align}
  v = \nabla q
\end{align}
in $\mathbb{R} \times (0,1)$.
\end{Lemma}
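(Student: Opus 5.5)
The plan is to reduce the statement to the classical Poincaré lemma on the universal cover $\mathbb{R}\times[0,1]$, and then to use the mean-zero hypothesis to show that the resulting primitive is periodic. Since $\mathbb{R}\times(0,1)$ is simply connected and $\nabla^\perp\cdot v=\partial_1 v^2-\partial_2 v^1=0$ in the sense of distributions, the standard (non-periodic) De~Rham theorem for $L^2_{\mathrm{loc}}$ fields gives some $\tilde q\in H^1_{\mathrm{loc}}(\mathbb{R}\times[0,1])$ with $v=\nabla\tilde q$. To avoid relying on that theorem on an unbounded set, I would construct $\tilde q$ concretely. First mollify $v$ in the $x$ variable only; this preserves periodicity, the curl-free condition and the mean of $v^1$. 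Then, for smooth curl-free fields, set $\tilde q(x,y)=\int_0^1 v\bigl(\gamma(\theta)\bigr)\cdot\gamma'(\theta)\,\d\theta$ along straight segments from a fixed base point in the strip. Finally pass to the limit, using that $H^1_{\mathrm{loc}}$ bounds on $\tilde q$ minus its mean are controlled by $\|v\|_{L^2}$ on compacts through the Poincaré inequality.

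Next I would show that $\tilde q(x+1,y)-\tilde q(x,y)$ is constant. Write $w(x,y)\coloneqq\tilde q(x+1,y)-\tilde q(x,y)$. Then $\nabla w=v(x+1,y)-v(x,y)=0$ by periodicity of $v$, so $w\equiv c$ on the connected strip. To identify $c$, integrate $\partial_1\tilde q=v^1$ over $[0,1]\times[0,1]$:
\[
c=\int_0^1\bigl(\tilde q(1,y)-\tilde q(0,y)\bigr)\d y=\int_{[0,1]\times[0,1]}v^1=0 .
\]
This step is justified for $H^1_{\mathrm{loc}}$ functions because $\tilde q(\cdot,y)$ is absolutely continuous for a.e.\ $y$, with derivative $v^1(\cdot,y)$. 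Hence $\tilde q$ is $1$-periodic in $x$, and $q\coloneqq\tilde q$ is the required function.

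The main obstacle is the rigorous construction of the primitive at low regularity, namely $v\in L^2_{\mathrm{loc}}$ with the curl condition holding only distributionally. I would handle it by regularization. Mollify $v$ in both variables after extending it evenly across $y=0$ and $y=1$, or simply work on slightly shrunk strips $\mathbb{R}\times(\delta,1-\delta)$. This produces smooth curl-free periodic fields $v_\delta$ whose primitives are periodic by the argument above, normalized to have zero mean on $[0,1]\times(\delta,1-\delta)$. Poincaré's inequality on the rectangles gives $H^1$ bounds uniform in $\delta$, so a diagonal subsequence converges weakly in $H^1_{\mathrm{loc}}$ to some $q$ with $\nabla q=v$. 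Periodicity passes to the limit, and the $H^1_{\mathrm{loc}}$ regularity up to $y\in\{0,1\}$ follows from $\nabla q=v\in L^2_{\mathrm{loc}}(\mathbb{R}\times[0,1])$ together with the uniform bounds.
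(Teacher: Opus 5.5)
The paper gives no proof of this lemma; it only quotes it from \cite[Lemma~2.3]{KNV}. Your argument is a correct, self-contained proof along the standard lines. You build a primitive $\tilde q$ on the simply connected strip. You note that $\tilde q(x+1,y)-\tilde q(x,y)$ has zero distributional gradient, so it equals a constant $c$. You then identify $c=\int_{[0,1]\times[0,1]}v^1=0$ via absolute continuity of $\tilde q$ on a.e.\ horizontal line. What this adds over the citation is that it shows exactly where the mean-zero hypothesis enters: it is the period of the primitive.

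The construction of $\tilde q$ has a few slips you should repair, though none is fatal.
\begin{itemize}
\item Mollifying only in $x$ does not make $v$ smooth in $y$, so the line-integral formula is not yet available at that stage.
\item Reflecting both components evenly across $y=0,1$ destroys $\nabla^\perp\cdot v=0$. For $v=\nabla(xy)=(y,x)$ the even reflection is $(|y|,x)$, whose curl $1-\mathrm{sgn}\,y$ does not vanish for $y<0$. The reflection compatible with gradients is $v^1$ even and $v^2$ odd, i.e.\ the gradient of the even reflection of $q$.
\item On the shrunk strips, your claim that the primitives of $v_\delta$ are periodic ``by the argument above'' needs $\int_{[0,1]\times(\delta,1-\delta)}v_\delta^1=0$, which is not literally your hypothesis. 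It does hold. Test the curl condition against $\chi(x)\phi(y)$ with $\sum_k\chi(x+k)\equiv1$; this shows that $y\mapsto\int_0^1 v^1(x,y)\,\mathrm{d}x$ is constant, hence zero a.e., and mollification preserves this.
\end{itemize}

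There are two simpler ways around the last point. You can drop periodicity of the approximants: Poincar\'e on $[-n,n]\times(\delta,1-\delta)$ still bounds the difference of means, so your uniform $H^1$ bounds survive, and you run your period argument once on the limit $q$. Simplest of all, avoid mollification entirely. On each rectangle $(-n,n)\times(0,1)$, which is bounded, Lipschitz and simply connected, the classical $L^2$ De~Rham theorem gives an $H^1$ primitive, unique up to a constant. Normalizing each to have zero mean on $(0,1)\times(0,1)$ makes them agree on overlaps. Their union is then the required $\tilde q\in H^1_{\mathrm{loc}}(\mathbb{R}\times[0,1])$, up to the boundary.
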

\colb

Finally, we prove a log-type inequality on~$\Omega$. To this end we define the BMO norm on $\Omega$, 
\[
\|f\|_{\text{BMO}(\Omega)}\coloneqq|\Omega|^{-1}\|f\|_{L^1(\Omega)}+\sup_{x\in \Omega, r>0}
|\Omega\cap B_r(x)|^{-1}\int_{\Omega\cap B_r(x)}\left|f-\fint_{\Omega\cap B_{r}(x)}f\right|.
\]
\begin{Lemma}
[A log-type estimate on a strip]
\label{L_Log01}
For each $f\in W^{1,4}(\Omega)$, we have
\[
    \|f\|_{\infty}\lec 1+\|f\|_{\text{BMO}(\Omega)}\left(1+\log^+\|\nabla f\|_{4 }\right).
\]
\end{Lemma}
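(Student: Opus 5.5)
We reduce to the classical Brezis--Gallouet/Kozono--Taniuchi type logarithmic inequality on $\R^2$ by extending $f$ from the strip. Write $\Omega=\T\times[0,1]$ and view $f$ as a $1$-periodic function in $x_1$ on $\R\times[0,1]$. First, reflect evenly across $x_2=0$ and $x_2=1$ to get a function $\tilde f$ on $\R\times[-1,2]$, periodic in $x_1$. Even reflection preserves $W^{1,4}$ with $\|\nabla\tilde f\|_{L^4(\T\times[-1,2])}\lec\|\nabla f\|_4$. It also preserves BMO up to a constant: mean oscillations over balls crossing the reflection line are controlled by oscillations over comparable balls in $\Omega$, using that $|\Omega\cap B_r(x)|\gtrsim \min(r^2,r)$ on the strip. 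Then multiply by a cutoff $\chi(x_2)$, equal to $1$ on $[0,1]$ and supported in $(-1,2)$, and by a smooth partition of unity in $x_1$ with supports of length $\le 2$. This produces finitely many compactly supported pieces $g_j\in W^{1,4}(\R^2)$ with $\|f\|_\infty\le \max_j\|g_j\|_{L^\infty(\R^2)}$.

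For each piece, invoke the standard inequality
\[
\|g\|_{L^\infty(\R^2)}\lec 1+\|g\|_{\mathrm{BMO}(\R^2)}\bigl(1+\log^+\|g\|_{W^{1,4}(\R^2)}\bigr),
\]
valid since $W^{1,4}\subset B^{1/2}_{\infty,\infty}$ in 2D (Kozono--Taniuchi with $s=1>2/4$). We then need to check two bounds.

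\begin{itemize}
\item $\|g_j\|_{\mathrm{BMO}(\R^2)}\lec \|f\|_{\mathrm{BMO}(\Omega)}$. Multiplication by a smooth bounded compactly supported function $\psi$ is bounded on BMO only in the inhomogeneous sense: $\|\psi g\|_{\mathrm{BMO}}\lec \|g\|_{\mathrm{BMO}}+\|g\|_{L^1(\text{supp})}$-type terms. This is exactly why the definition of $\|\cdot\|_{\mathrm{BMO}(\Omega)}$ includes $|\Omega|^{-1}\|f\|_{L^1}$. For large balls, radius $r\gtrsim1$, the oscillation is bounded by the average of $|f|$ over the intersection with the support, and that average is controlled by the $L^1$ part plus oscillation via a chain of unit balls.
\item The $W^{1,4}$ norm inside the log. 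We have $\|g_j\|_{W^{1,4}}\lec \|f\|_{L^4}+\|\nabla f\|_4$. The $L^4$ term must be absorbed: by John--Nirenberg on the bounded domain, $\|f\|_{L^4(\Omega)}\lec\|f\|_{\mathrm{BMO}(\Omega)}$. Then $\log^+(A+B)\le \log 2+\log^+A+\log^+B$, and the extra term $\|f\|_{\mathrm{BMO}}\log^+\|f\|_{\mathrm{BMO}}$ is handled by splitting. If $\|f\|_{\mathrm{BMO}}\le \|\nabla f\|_4+1$ the log is comparable. Otherwise, Poincaré gives $\|f\|_\infty\lec \|f\|_{L^1}+\|\nabla f\|_4\lec\|f\|_{\mathrm{BMO}}$ directly via Morrey's embedding.
\end{itemize}

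The main obstacle is the BMO bookkeeping: showing that the reflection, the cutoffs and the periodic partition keep the BMO norm bounded by the quantity $\|f\|_{\mathrm{BMO}(\Omega)}$, which is defined with the intersections $\Omega\cap B_r(x)$. I would handle this using the doubling property of $\Omega$ (with the measure of $\Omega\cap B_r(x)$ comparable to $\min(r,1)\,r$) and the standard lemma that multiplication by a Lipschitz, compactly supported function maps inhomogeneous BMO into BMO. Everything else reduces to the known $\R^2$ inequality.
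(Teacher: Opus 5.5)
Your reduction is correct, but it takes a genuinely different route from the paper's.

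\textbf{The paper's route.} The paper works intrinsically on $\Omega$. It uses the even extension only to apply Morrey's inequality at the single scale $r=\min\{1/2,(1+\|\nabla f\|_4)^{-2}\}$, which gives $|f(x)-f_{B'_r(x)}|\lesssim 1$. It then telescopes averages over the doubling sets $B'_{2^jr}(x)=\Omega\cap B_{2^jr}(x)$ up to a scale $\ge 1/2$. This takes $J\lesssim 1+\log^+\|\nabla f\|_4$ steps, each costing $\|f\|_{\mathrm{BMO}(\Omega)}$, and the top-scale average is bounded by the $L^1$ part of the norm. In effect, the paper reruns the Kozono--Taniuchi mechanism directly on the strip. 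So it needs no BMO extension or multiplier lemma, and only $\|\nabla f\|_4$ (not the full $W^{1,4}$ norm) ever enters the logarithm.

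\textbf{Your route.} You outsource that mechanism to the $\mathbb{R}^2$ inequality and pay for it with exactly the two pieces of bookkeeping you identify. First, you must bound the $\mathrm{BMO}(\mathbb{R}^2)$ norm of the reflected, cut-off function by $\|f\|_{\mathrm{BMO}(\Omega)}$. The $L^1$ term handles balls of radius $\gtrsim 1$. For a small ball of radius $r$, the average of the reflected function is $O\bigl((1+\log(1/r))\|f\|_{\mathrm{BMO}(\Omega)}\bigr)$, and this is multiplied by the $O(r)$ oscillation of the cutoff. Second, you must strip $\|f\|_{L^4}$ out of the logarithm, which your case split does. In return, the argument is modular and transfers to any domain admitting an extension that preserves both $\mathrm{BMO}$ and $W^{1,4}$.

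\textbf{Minor points.}
\begin{itemize}
\item On $\mathbb{T}\times[0,1]$ one has $|\Omega\cap B_r(x)|\sim\min(r^2,1)$, not $\min(r^2,r)$; the latter holds for the non-periodic strip.
\item A single cutoff in $x_1$ equal to $1$ on one period suffices. With several pieces you only get $\|f\|_\infty\lesssim\max_j\|g_j\|_\infty$, with a bounded-overlap constant.
\item John--Nirenberg is unnecessary. In the case $\|f\|_{\mathrm{BMO}(\Omega)}\le 1+\|\nabla f\|_4$, Poincar\'e--Sobolev already gives $\|f\|_{L^4}\lesssim\|f\|_{L^1}+\|\nabla f\|_4\lesssim 1+\|\nabla f\|_4$.
\end{itemize}
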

\begin{proof}
    For any $r\in (0,1)$, we define 
    \[
    f_{B'_r(x)}\coloneqq \frac{1}{|B'_r(x)|}\int_{B'_r(x)} f(y)\,\text{d}y,
    \] where $B'_r(x)=\Omega\cap B_r(x)$.
 In order to make sense of $f_{B_r(x)}$, we now identify $f$ with its even extension across $S\cup \widetilde{S}$.  We have
    \[
    \begin{split}
        \left|f(x)-f_{B'_r(x)}\right|&\leq \frac{1}{|B'_r(x)|}\int_{B'_r(x)}|f(x)-f(y)|\,\text{d}y
        \lec    \frac{1}{|B_r(x)|}\int_{B_r(x)}|f(x)-f(y)|\,\text{d}y
        \\& \lec    
        r^{1/2}\|\nabla f\|_{L^4(B_r(x))}
        \lec r^{1/2}\|\nabla f\|_{4}
    \end{split}
    \]
    for each $x\in \Omega $,  where we used Morrey's inequality in the third inequality.     We fix 
    \[
    r\coloneqq \min\left\{\frac{1}{2},(1+\|\nabla f\|_{4 })^{-2}\right\},
    \]
    so that
    \[
        |f(x)-f_{B'_r(x)}|\lec 1
    \]
    for all $x\in \Omega$.
    We also choose dyadic radii $r_j=2^j r$ for all $j\geq 1$, and we let $J\geq 1$ be such that $r_J\geq 1/2$. Note that
\[
J \lec 1+ \log^+ \| \nabla f \|_{4}
\]    
and that the sets $B'_{r_j}(x)$ are  doubling, uniformly with respect to $x\in \Omega $. Thus 
    \begin{align}
        \begin{split}
        &|f_{B'_{r_j}(x)}-f_{B'_{r_{j+1}}(x)}|=\left|\fint_{B'_{r_j}}(f-f_{B'_{r_{j+1}}(x)})\right|
        \leq \fint_{B'_{r_j}}|f-f_{B'_{r_{j+1}}(x)}|
        \\&\indeq\leq 2\fint_{B'_{r_j+1}}|f-f_{B'_{r_{j+1}}(x)}|\lec_{\Omega}\|f\|_{\text{BMO}(\Omega)}
        \end{split}
    \end{align}
for all $j\leq J-1$, $x\in \Omega$. Therefore, 
    \[
    \begin{split}
        &|f(x)|\leq |f_{B'_{r_J}(x)}|+\sum_{j=0}^{J-1}|f_{B'_{r_j}(x)}-f_{B'_{r_{j+1}}(x)}|+|f(x)-f_{B'_r(x)}|
        \\&\lec J\|f\|_{\text{BMO}(\Omega)}+1
        \lec 1+\|f\|_{\text{BMO}(\Omega)}
        \left(1+\log^+(\|\nabla f\|_{4 }\right)
        \end{split}
    \]
    for all $x\in \Omega$, which concludes the proof.
\end{proof}

\section{Velocity Estimates}\label{sec_vel_ests}

In this section we show the order reduction estimates \eqref{EQ47}--\eqref{EQ48}, and prove some Biot-Savart estimates on $u[\omega ]$ (recall \eqref{EQ14}--\eqref{EQ15a} for the definition). We also discuss log-Lipschitz inequalities in Lemma~\ref{L6}.

As for the order reduction estimates \eqref{EQ47}--\eqref{EQ48} for $u$, i.e.,
    \begin{align}
    &
    \|D^{(\alpha_1,0,\alpha_3)}u\|
    \leq
    2\|D^{(\alpha_1-1,0,\alpha_3)}\omega\|\comma \alpha_1-1,\alpha_3\geq 0,
    \label{EQ47copy}
    \\&
    \|D^{(\alpha_1,\alpha_2,\alpha_3)}u\|
    \leq
    2\sum_{k=1}^{\alpha_2}\|D^{(\alpha_1+k-1,\alpha_2-k,\alpha_3)}\omega\|,
    \quad\alpha_1,\alpha_2- 1,\alpha_3\geq 0,
    \label{EQ48copy}
    \end{align}
we note that applying tangential and time differential operators on each side
of the Poisson equation \eqref{EQ14} for $u_1$ preserves the structure, and so we deduce, analogously to
\eqref{EQ55}, that
\begin{align}
    \|D^{(\alpha_1,0,\alpha_3)}\nabla u_1\|
    \leq
    \|D^{(\alpha_1,0,\alpha_3)}\omega\|
    \comma \alpha_1,\alpha_3\geq0    
    ,
    \label{EQ56}
\end{align}
and, analogously to \eqref{EQ57},
  \begin{equation}
    \|
    D^{(\alpha_1,0,\alpha_3)}\nabla u_2
    \|
    \leq
    2\|D^{(\alpha_1,0,\alpha_3)}\omega\|   
    \comma
     \alpha_1,\alpha_3\geq0
     ,
   \label{EQ143}
  \end{equation}
which proves~\eqref{EQ47copy}.
 Since $\p_2 u_1 = \omega - \p_1 u_2$, we have
\begin{align}
    \begin{split}
        &\|D^{(\alpha_1,\alpha_2,\alpha_3)}u_1\|
        =\|D^{(\alpha_1,\alpha_2-1,\alpha_3)}\partial_2u_1\|
        \\&\indeq
        \leq \|D^{(\alpha_1,\alpha_2-1,\alpha_3)}\omega\|
        +\|D^{(\alpha_1+1,\alpha_2-1,\alpha_3)} u_2\|
    \end{split}
    \label{EQ60}
\end{align}
for $\alpha_1,\alpha_2-1,\alpha_3\geq 0$.
Further, the divergence-free condition implies
\begin{align}
        \|D^{(\alpha_1,\alpha_2,\alpha_3)}u_2\|
        =\|D^{(\alpha_1,\alpha_2-1,\alpha_3)}\partial_2u_2\|
        =\|D^{(\alpha_1+1,\alpha_2-1,\alpha_3)}u_1\|
        \label{EQ61}
\end{align}
for $\alpha_1,\alpha_2-1,\alpha_3\geq 0$.
Using~\eqref{EQ60} and~\eqref{EQ61} iteratively, we get
\begin{align}
    \|D^{(\alpha_1,\alpha_2,\alpha_3)}u_i\|
    \leq
    2
    \sum_{k=1}^{\alpha_2}\|D^{(\alpha_1+k-1,\alpha_2-k,\alpha_3)}\omega\|
 \end{align}
for $i=1,2$ and  $\alpha_1,\alpha_2-1,\alpha_3\geq 0$, which completes the proof of~\eqref{EQ48copy}.\\

In order to state the Biot-Savart estimates, we recall that, even though $u[\omega ]$ is defined (below~\eqref{EQ14}--\eqref{EQ15}) for each $t$, it uses $\omega$ for all times between $0$, $t$, via the normalization condition $\int u_1 = \int_0^t \int \omega$ from~\eqref{EQ14}. However, for $D^k  u [\omega ]$, with $k\geq 1$, we expect this normalization condition to play no role in Biot-Savart estimates, which we now show.

\begin{Lemma}[The Biot-Savart estimates]\label{L02}
For all $k\geq 1$ 
\eqnb\label{Hkdot}
\| u [\omega ] \|_{\dot H^k } \lec_k \| \omega \|_{\dot H^{k-1}}.
\eqne
\end{Lemma}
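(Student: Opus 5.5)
We prove \eqref{Hkdot} by induction on $k$, working with the spatial gradient of $u=u[\omega]$ at a fixed time. Since $\dot H^k$ only involves derivatives of order $k\geq 1$, the normalization $\int_\Omega u_1=\int_0^t\int_\Omega\omega$ in \eqref{EQ14} plays no role: it only fixes an additive constant in $u_1$, which every derivative kills. So the task is to bound $\|\nabla^k u\|$ by $\|\nabla^{k-1}\omega\|$ using only the div-curl system \eqref{EQ13} with $u_2-1=0$ on $S\cup\tilde S$.

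\emph{Base case $k=1$.} Set $v\coloneqq u-(0,1)$. Then $\div v=0$, $\curl v=\omega$, and $v_2=0$ on $S\cup\tilde S$. We integrate by parts on the periodic strip:
\[
\|\nabla v\|^2=\|\div v\|^2+\|\curl v\|^2+\text{(boundary terms)}.
\]
The boundary terms have the form $\int_{S\cup\tilde S}(v_1\p_1 v_2 - v_2\p_1 v_1)$ with signs. They vanish because $v_2\equiv 0$ on $S\cup\tilde S$, so both $v_2$ and $\p_1 v_2$ vanish there. Hence $\|\nabla u\|=\|\omega\|$. An alternative route is to use \eqref{EQ15} for $u_2-1$ (a Dirichlet problem, which gives $\|\nabla u_2\|\le\|\omega\|$) and \eqref{EQ14} for $u_1$ (a Neumann problem tested against $u_1-\bar u_1$); this is what \eqref{EQ55}--\eqref{EQ57} record.

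\emph{Inductive step.} Assume the bound up to order $k-1$ and split $\nabla^k u$ by the number $j$ of normal ($x_2$) derivatives.
\begin{itemize}
\item[(i)] For $j=0$, we use that $\p_1^{k-1}$ commutes with the problem. The function $\p_1^{k-1}v$ satisfies the same div-curl system with data $\p_1^{k-1}\omega$, and $\p_1^{k-1}v_2=0$ on the boundary, because the boundary is flat and tangential derivatives preserve the homogeneous Dirichlet condition. The base case then gives $\|\nabla\p_1^{k-1}u\|\le\|\p_1^{k-1}\omega\|$, which covers all terms with at most one normal derivative.
\item[(ii)] For $j\ge 2$, we trade normal derivatives for tangential ones, as in \eqref{EQ60}--\eqref{EQ61}. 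The relations $\p_2 u_1=\omega+\p_1 u_2$ (with the sign convention $\omega=\p_1u_2-\p_2u_1$ giving $\p_2u_1=\p_1u_2-\omega$) and $\p_2u_2=-\p_1u_1$ reduce $\p_1^{k-j}\p_2^{j}u$ to $\p_1^{k-j+1}\p_2^{j-1}u$ plus $\p_1^{k-j}\p_2^{j-1}\omega$. Iterating, we obtain
\[
\|\p_1^{k-j}\p_2^j u\|\le\sum_{i}\|\nabla^{k-1}\omega\|+\|\nabla\p_1^{k-1}u\|,
\]
with at most $k$ terms. This yields a $k$-dependent constant.
\end{itemize}
Combining (i) and (ii) gives $\|u\|_{\dot H^k}\lec_k\|\omega\|_{\dot H^{k-1}}$.

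The main obstacle is justifying the boundary integrations by parts in the base case, applied to tangential derivatives. It requires enough regularity of $u$ up to the boundary. We would handle this by first assuming $\omega$ is smooth, using elliptic regularity for \eqref{EQ14}--\eqref{EQ15} on the flat strip (for instance via Fourier series in $x_1$), and then concluding by density. Once the identity $\|\nabla w\|^2=\|\div w\|^2+\|\curl w\|^2$ for fields $w$ with $w_2|_{\p\Omega}=0$ is in hand, the rest is the algebraic reduction above.
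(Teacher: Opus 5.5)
Your proof is correct and follows essentially the paper's route: tangential derivatives $\p_1^{k-1}$ preserve the boundary-value problem, and normal derivatives are traded for tangential ones through $\div u=0$ and $\curl u=\omega$, exactly as in \eqref{EQ60}--\eqref{EQ61}. The only difference is the base case. You use the identity $\|\nabla v\|^2=\|\div v\|^2+\|\curl v\|^2$ for $v=u-(0,1)$, where the null-Lagrangian boundary term $\int_{S\cup\tilde S} v_1\p_1 v_2$ vanishes because $v_2|_{S\cup\tilde S}=0$; this gives $\|\nabla u\|=\|\omega\|$ for both components at once, whereas the paper tests the Neumann problem \eqref{EQ14} with $u_1$ to get $\|\nabla u_1\|\le\|\omega\|$ and then recovers $\nabla u_2$ from \eqref{EQ55a}.
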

As for the $L^2$-level estimates, we use the normalization condition for $u_1$ from \eqref{EQ14},   the boundary condition for $u_2$ from \eqref{EQ15}, and the Poincar\'e inequality  \eqref{Hkdot} to get 
 \begin{align}
        \|u_1\|&\leq \left\| u_1 - \int_\Omega u_1  \right\| + \left| \int_\Omega u_1 \right| \lec  \|\omega\|+\int_0^t\|\omega\|,
           \label{EQ50}\\
           \|u_2\|&\lec 1+ \| \nabla u_2 \| \lec  \|\omega\|+1 \label{EQ49}
    .
    \end{align}

\begin{proof}[Proof of Lemma~\ref{L02}.]

Testing~$\eqref{EQ14}_1$ with $u_1$ gives
  \begin{align}
    \begin{split}
    -\int_{\Omega}\partial_2\omega u_1
    &=\int_{\Omega}u_1\Delta u_1 
    =
    - \int_{\Omega}|\nabla u_1|^2
    - \int_{\tilde{S}}\omega u_1
    + \colb \int_{S}\omega u_1
%    \\&
%    =
%    \int_\Omega\omega\partial_2u_1-\int_{\tilde{S}}\omega u_1+\int_{S}\omega u_1
    ,
  \end{split}
    \label{EQ52}
  \end{align}
where we used the boundary condition  \eqref{EQ14}\textsubscript2 in the second equality.
On the other hand, integrating by parts yields
\begin{align}
    -\int_\Omega\partial_2\omega u_1
    =\int_\Omega\omega\partial_2u_1-\int_{\tilde{S}}\omega u_1+\int_{S}\omega u_1.
    \label{EQ53}
\end{align}
%Note that $u_1|_{S\cup \tilde S}=0$.
Combining \eqref{EQ52} and \eqref{EQ53}, we obtain
\[
    \int_\Omega |\nabla u_1|^2=-\int\omega\partial_2u_1\leq
    \|\nabla u_1\|_{L^2(\Omega)} \|\omega\|_{L^2(\Omega)}
   ,
   \]
from where
\begin{align}
    \|\nabla u_1\|\leq \|\omega\|.
   \label{EQ55}
\end{align}
Since \eqnb\label{EQ55a}
    \partial_1u_2-\partial_2u_1=\omega \quad \text{ and } \quad \partial_1 u_1+\partial_2 u_2=0,\eqne
    we also derive
    \begin{align}
    \|\nabla u_2\|\lec \|\omega\|.
   \label{EQ57}
\end{align}
Moreover, for each $k\geq 1$, we  use the derivative reduction estimates~\eqref{EQ47copy}--\eqref{EQ48copy}, and the  same argument as \eqref{EQ52}--\eqref{EQ53} with $u$ replaced by $\partial_1^{k}u$, and apply $\p_1^k $ of \eqref{EQ55a} to obtain \eqref{Hkdot}, as required. 
\end{proof}
\begin{Lemma}[The log-Lipschitz estimates]\label{L6}
We have 
\eqnb\label{logW14}
\| \nabla u [\omega ] \|_\infty \lec 1+ \| \omega \|_\infty \log (2+ \| \omega \|_\infty +\| \nabla \omega \|_{4} )
\eqne
and
\eqnb\label{logHk}
\| \nabla u [\omega ] \|_\infty \lec_k 1+ \| \omega \|_\infty \log (2+  \| \omega \|_{H^k} )
\eqne
for every $k\geq 2$.
\end{Lemma}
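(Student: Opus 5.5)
The plan is to combine the log-type inequality of Lemma~\ref{L_Log01}, applied to $f=\nabla u$ (componentwise), with a BMO bound on $\nabla u$ in terms of $\|\omega\|_\infty$. For \eqref{logHk} we then add the Sobolev embedding $H^k\subset W^{1,4}$, valid for $k\geq 2$ in two dimensions.

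Applying Lemma~\ref{L_Log01} to each component $f=\p_i u_j$, and assuming \eqref{bmo_claim} below, we obtain
\[
\|\nabla u\|_\infty \lec 1+\|\nabla u\|_{\mathrm{BMO}(\Omega)}\bigl(1+\log^+\|\nabla^2 u\|_{4}\bigr).
\]
The second-order term is handled by elliptic $W^{2,4}$ regularity for the Poisson problems \eqref{EQ14}--\eqref{EQ15} on the flat strip. Even extension of $u_1$ across $S\cup\tilde S$ (Neumann data $-\omega$, which has $\p_2$-odd structure) and odd extension of $u_2-1$ (Dirichlet data) reduce both problems to periodic problems in $x_2$. Calder\'on--Zygmund theory then gives $\|\nabla^2 u\|_4\lec \|\nabla\omega\|_4+\|\omega\|_4$, with the normalization of $\int u_1$ entering only at the $L^2$ level. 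Combined with $\|\omega\|_4\lec\|\omega\|_\infty$, this reduces \eqref{logW14} to the claim
\begin{equation}\label{bmo_claim}
\|\nabla u[\omega]\|_{\mathrm{BMO}(\Omega)}\lec \|\omega\|_\infty ,
\end{equation}
together with the elementary bound $a(1+\log^+ b)\lec a\log(2+a+b)$.

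For \eqref{bmo_claim}, we write $\nabla u$ as a Calder\'on--Zygmund operator applied to $\omega$. Using the reflections above, $u_2-1$ becomes $\p_1\Delta^{-1}\tilde\omega$ on $\T\times(\R/2\Z)$, where $\tilde\omega$ is the even reflection of $\omega$; this works because $\p_1\omega$ extends consistently with the odd reflection of $u_2-1$. Similarly, $u_1$ (up to a constant) is $-\p_2\Delta^{-1}\tilde\omega$ after the even reflection. Hence $\nabla u=\nabla\nabla^\perp\Delta^{-1}\tilde\omega$ plus a constant, and second derivatives of $\Delta^{-1}$ on the torus are bounded from $L^\infty$ to $\mathrm{BMO}$. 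The even reflection preserves $L^\infty$, and restricting a BMO function to $\Omega$ is controlled by the $\mathrm{BMO}(\Omega)$ norm defined above, since the sets $\Omega\cap B_r$ are uniformly comparable to balls. The $L^1$ part of that norm is bounded by $\|\nabla u\|\lec\|\omega\|\lec\|\omega\|_\infty$, using \eqref{EQ55} and \eqref{EQ57}.

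For \eqref{logHk}, we use $\|\nabla\omega\|_4\lec\|\omega\|_{H^2}\le\|\omega\|_{H^k}$ and $\|\omega\|_\infty\lec\|\omega\|_{H^2}$, so that $\log(2+\|\omega\|_\infty+\|\nabla\omega\|_4)\lec\log(2+\|\omega\|_{H^k})$. The main obstacle is verifying the reflection step carefully: the Neumann condition $\p_2u_1=-\omega$ must be compatible with the extension, and the extended functions must solve the periodic problems without producing boundary delta measures. Concretely, we need $\p_2$ of the even extension of $u_1$ to be odd with jump zero, which follows because $\p_2 u_1$ matches $-\omega$ from both sides after reflection. If this step fails, the fallback is to use explicit Green's function kernels on the strip and estimate their singular parts directly.
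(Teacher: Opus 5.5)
Your overall plan is the paper's: reflect to the doubled torus $M=\T\times(\mathbb{R}/2\mathbb{Z})$, use Calder\'on--Zygmund bounds ($L^\infty\to\mathrm{BMO}$ for $\nabla u$, and $L^4\to L^4$ one order higher), apply Lemma~\ref{L_Log01} to $\nabla u$, and get \eqref{logHk} by Sobolev embedding. Reflecting $u_1$ and $u_2-1$ separately, instead of the Dirichlet stream function $\psi$ of \eqref{EQL01}, would be only a cosmetic difference. However, the reflection step, which you rightly flag as the crux, is wrong as written, in two places.

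First, $\omega$ must be reflected \emph{oddly}, not evenly. Since $\Delta^{-1}\omega^{\mathrm{even}}$ is even in $x_2$, $\partial_1\Delta^{-1}\omega^{\mathrm{even}}$ need not vanish on $S$. For $\omega=\cos(2\pi x_1)$ it equals $\sin(2\pi x_1)/(2\pi)$, so it is not $u_2-1$. The correct identity is $(u_2-1)^{\mathrm{odd}}=\partial_1\Delta^{-1}\omega^{\mathrm{odd}}$, which is consistent because $\partial_1\omega^{\mathrm{odd}}=(\partial_1\omega)^{\mathrm{odd}}$.

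Second, your verification for $u_1$ is false. $\partial_2 u_1^{\mathrm{even}}$ is odd in $x_2$ and equals $-\omega$ at $x_2=0^+$, so it jumps by $-2\omega|_S$ across $S$ (and similarly across $\tilde S$). This jump is nonzero exactly in the case of interest, $\eta\ne0$. What rescues the argument is that $-\omega^{\mathrm{odd}}$ has exactly the same jump. Hence the singular parts of $\Delta u_1^{\mathrm{even}}$ and $-\partial_2\omega^{\mathrm{odd}}$ coincide, and $\Delta u_1^{\mathrm{even}}=-\partial_2\omega^{\mathrm{odd}}$ holds in $\mathcal{D}'(M)$. So $\nabla u=\nabla\nabla^\perp\Delta^{-1}\omega^{\mathrm{odd}}$ on $\Omega$, and the BMO bound follows from $\|\omega^{\mathrm{odd}}\|_{L^\infty(M)}=\|\omega\|_\infty$.

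The same jump also invalidates your $W^{2,4}$ step for $u_1$. The distribution $\partial_2^2u_1^{\mathrm{even}}$ contains the measure $-2\omega|_S\,\delta_{\{x_2=0\}}$, and $\partial_2\omega^{\mathrm{odd}}\notin L^4(M)$. So Calder\'on--Zygmund theory on the torus does not bound $\|\nabla^2u_1\|_{4}$, and "reduce to periodic problems" fails for the even extension of $u_1$.

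The paper treats exactly this point separately. Third derivatives of $\psi$ containing a $\partial_1$ come from the odd reflection of the tangential problem for $\partial_1\psi$, whose data $\partial_1\omega^{\mathrm{odd}}$ has no singular part. The remaining one, $\partial_2^3\psi=-\partial_2\omega-\partial_1^2\partial_2\psi$, is read off from the equation inside $\Omega$.

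In your variables the equivalent fix is:
\begin{itemize}
\item bound $\|\nabla^2u_2\|_{4}\lesssim\|\partial_1\omega\|_{4}$ via the clean odd reflection of $u_2-1$;
\item recover $\nabla^2u_1$ from $\partial_1u_1=-\partial_2u_2$ and $\partial_2u_1=\partial_1u_2-\omega$.
\end{itemize}
With the odd reflection of $\omega$ and this handling of $\nabla^2 u_1$, your argument closes and coincides with the paper's.
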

Lemma~\ref{L6} is a version of the Brezis-Weinger inequality (see~\cite{BG,BW} and \cite{KT}). Since the Biot-Savart law defined by \eqref{EQ14}--\eqref{EQ15} is nonstandard and involves inflow and outflow, we provide the proof for the reader's convenience.  
\begin{proof}
Let $\psi$ be the solution to
    \eqnb\label{EQL01}
     -\Delta\psi=\omega \quad \text{ in } \Omega \comma \psi|_{S\cup\tilde S}=0
    \eqne
    that is periodic in~$x_1$. By the uniqueness of solutions to the elliptic problems \eqref{EQ14}--\eqref{EQ15}, we observe that
    \[
        u_1=\partial_2\psi+\frac{1}{|\Omega|}\int_0^t \int_\Omega \omega
\andand
        u_2=-\partial_1\psi+1.
    \]
Now we focus on estimating the stream function~$\psi$.
    We let $M\coloneqq \mathbb{T}\times(\mathbb{R}/2\mathbb{Z})$, represented as $\mathbb{T}\times(-1,1)$ with $x_2=1$ and $x_2=-1$ identified. We define the odd extensions
    \[
        \psi^{\mathrm{odd}} (x_1,x_2) \coloneqq 
        \begin{cases}
        \psi(x_1,x_2)\comma    &0<x_2<1,
        \\
        -\psi(x_1,-x_2)\comma   &-1<x_2<0,
        \end{cases}
    \]
    and
    \[
        \omega^{\mathrm{odd}}(x_1,x_2) \coloneqq 
        \begin{cases}
        \omega(x_1,x_2)\comma    &0<x_2<1,
        \\
        -\omega(x_1,-x_2)\comma   &-1<x_2<0,
        \end{cases}
    \]
    and we extend both $\psi^{\mathrm{odd}}$ and $\omega^{\mathrm{odd}}$ $2-$periodically in~$x_2$. For any test function
    $\eta\in C_c^\infty(M)$, we define
    \[
    \eta^{\mathrm{odd}}(x_1,x_2)\coloneqq \eta(x_1,x_2)-\eta(x_1,-x_2).
    \]
    Because $\eta(x,1)=\eta(x,-1)$ for $x\in \mathbb{T}$,
    $\eta^{\mathrm{odd}}$ has zero trace on $S\cup\tilde S$. By a change of variable, we obtain
    \[
        \int_{M}\nabla\psi^{\mathrm{odd}}\cdot\nabla\eta
        =\int_\Omega\nabla\psi\cdot\nabla\eta^{\mathrm{odd}}
        =\int_\Omega\omega\eta^{\mathrm{odd}}=\int_M\omega^{\mathrm{odd}}\eta.
    \]
    Therefore, 
    \[
        -\Delta \psi^{\mathrm{odd}}=\omega^{\mathrm{odd}} \onon{M}.
    \]
    By a standard Fourier multiplier argument on $M$, we derive two estimates,
    \begin{align}
    \label{EQL03}
        \|\nabla u\|_{4}\lec \|D^2\psi^{\mathrm{odd}}\|_{L^4(M)}
        \lec \|\omega^{\mathrm{odd}}\|_{L^4(M)}
        \lec \|\omega\|_{\infty},
    \end{align}
    and
    \begin{align}
    \label{EQL04}
        \|\nabla u\|_{\text{BMO}(\Omega)}\lec \|D^2\psi\|_{\text{BMO}(\Omega)}
        \lec \|\omega\|_{\infty}.
    \end{align}
   In order to control $\| \nabla u \|_{W^{1,4}}$, we need to estimate the third order derivatives of~$\psi$. We note that $\partial_2\omega^{\mathrm{odd}}$ might give us a new term around~$\T\times\{0\}$. We consider
    \[
        -\Delta(\partial_1\psi)=\partial_1\omega \quad \text{ in } \Omega \comma \partial_1\omega|_{S\cup\tilde S}=0.
    \]
   An argument analogous to \eqref{EQL03} shows that the $L^4$ norm of each third order derivative $\psi$, except for $\partial_{2}^3\psi$, can be bounded by $\| \omega \|_{W^{1,4}}$. For $\partial_{2}^3\psi$  the PDE in \eqref{EQL01}   gives that
    \[
    \partial_2^3\psi=-\partial_2\omega-\partial_1^2\partial_2\psi.
    \]
    Therefore, we arrive at
    \begin{align}
    \begin{split}
             \label{EQL05}
        \|\nabla u\|_{W^{1,4}}&\lec \|D^2\psi^{\mathrm{odd}}\|_{L^4(M)}+\|D^3\psi^{\mathrm{odd}}\|_{L^4(\Omega)}
        \\& \lec\|\omega^{\mathrm{odd}}\|_{L^4(M)}+\|\partial_1\omega^{\mathrm{odd}}\|_{L^4(M)}
        +\|\partial_2\omega\|_{L^4(\Omega)}\lec
        \|\omega\|_{\infty}+
        \|\nabla\omega\|_4
    \end{split}
    \end{align}
    Applying Lemma~\ref{L_Log01} to $\nabla u$ and using \eqref{EQL04}, and \eqref{EQL05}, we obtain
    \[
    \begin{split}
        \|\nabla u\|_{\infty}&\lec 1+\|\nabla u \|_{BMO(\Omega )}(1+\log^+\| \nabla u \|_{4} ) \\
        &\lec 1+\|\omega\|_{\infty}(1+\log^+(\|\omega\|_{\infty}+
        \|\nabla\omega\|_4))\\
        &\lec 1+\|\omega\|_{\infty}\log(2+\|\omega\|_{\infty}+
        \|\nabla\omega\|_4),
        \end{split}
    \]
    which shows~\eqref{logW14}. The claim \eqref{logHk} follows from \eqref{logW14} using Sobolev embedding.
\end{proof}

\section{Global existence in Sobolev Spaces}
\label{sec_sobolev}

In this section, we prove Theorem~\ref{T01}.\\

We proceed inductively,
starting with
$ \omega^{(0)}\equiv0$.
Given $\omega^{(n)}$, for $n\in\mathbb{N}_0$, we
set 
\eqnb\label{un_def}
u^{(n)} \coloneqq u [\omega^{(n)}]
,
\eqne 
and we define  $\omega^{(n+1)}$ as the solution to the transport equation system 
    \eqnb\label{EQ112}
        \begin{split}
            \partial_t\omega^{(n+1)}+u^{(n)}_1 \partial_1 \omega^{(n+1)}+u^{(n)}_2\partial_2\omega^{(n+1)}&=0, \\
            \omega^{(n+1)}\big|_{t=0}&=\omega_0,\\
            \omega^{(n+1)}\big|_{S}&=\eta(\cdot,t).
        \end{split}
        \eqne
We will treat $\omega_0 $ as $\eta (\cdot ,0)$, extended to~$\Omega$.\smallskip\\

\noindent\texttt{Step 1.} We prove uniform $L^\infty$ estimates.\\\

To this end, given $x\in \Omega$, $t>0$, we set $s_n$ to be the time from which $\onn$ can be represented as following a trajectory from either initial data or boundary condition on $S$, namely,
\[
t_n\coloneqq  \inf  \left\lbrace s \in [0 ,t ] \colon X_n (x,s ;t) \in \Omega \right\rbrace, 
\]
where $X_n(x,s ;t)$ denotes the particle trajectory of $u^{(n)}$ starting at time $t$, i.e., $X_n (x,t;t) \coloneqq x$ and
\[
\p_s X_n (x,s;t) = u^{(n)}(X_n (x,s; t ),s)
\]
for $s \ne t$. In other words, as $s \in [0,t]$ decreases, $X_n (x,s;t)$ is the trajectory moving backwards in time from $x$ (at $s=t$). Thus $X_n (x,t_n;t)$ is either the point on $S$ from which the trajectory passing through $x$ at time $t$ originates (if the infimum is not attained), or $X_n (x,t_n;t)$ is the origin of such trajectory at time $0$ (if $\inf = \min =0=t_n$). In particular we have that
\eqnb\label{om_repr}
\onn (x,t) = \eta (X_n (x,t_n;t ),t_n)
\eqne
for all $x\in \Omega$, $t>0$. Hence,
\eqnb\label{om_Linfty}
\| \onn \|_\infty \leq \sup_{[0,t] } \| \eta (\cdot , t) \|_\infty =: A(t)
\eqne
 for all $n\geq 0$, $t\geq 0$. 
We note in passing that \eqref{om_Linfty} gives that
\[
\left| \int_S u_1^{(n)} \right| \leq \int_0^t \int_\Omega | \on | \lec \int_0^t A = J_2(t),
\]
where we recalled the definition \eqref{EQ15} of~$u[\omega ]$. This implies that
\eqnb\label{un_LinftyS}
\| u^{(n)} \|_{L^\infty (S)} \lec 1+ J_2(t) + \| \p_1  u^{(n)} \|_{L^1 (S) }   \lec 1+ J_2(t) + \| \nabla  \on  \|  
\eqne
for each $t\geq 0$, where we also recalled \eqref{Hkdot} to write $\| \p_1 u^{(n)} \|_{L^2 (S)} \lec \| \nabla u^{(n)}\|_{H^1}\lec \| \on \|_{H^1} \lec J_2 + \| \nabla \on \|$ in the last step.

\begin{Remark}[On the need to control $\| \nabla \omega \|_4$]
{\rm
Having established the $L^\infty$ control in \eqref{om_Linfty}, we now aim to obtain global $H^k$ estimates. However, before this can be achieved, we first need to estimate $\| \nabla \omega \|_4$; let us briefly comment on this issue. For any multiindex $\alpha = (\alpha_1, \alpha_2)$ we have
\eqnb\label{Hkest}
\begin{split}
\frac12 \frac{\d }{\d t } \| D^\alpha \omega \|^2 &= - \underbrace{\int u_1 D^\alpha \omega  \p_1 D^\alpha \omega}_{=0} - \underbrace{ \int u_1 D^\alpha \omega  \p_1 D^\alpha \omega }_{\geq - \int_S | D^\alpha \omega |^2 } - \sum_{\beta < \alpha } {\alpha \choose \beta } \int (D^{\alpha - \beta }u \cdot \nabla D^\beta \omega ) D^\alpha \omega \\
&\leq \| D^\alpha \omega \|_{L^2(S)}^2 +\sum_{\beta < \alpha } {\alpha \choose \beta }  \| D^{\alpha - \beta } u \cdot \nabla D^\beta \omega  \|\, \| D^\alpha \omega \|
.
\end{split} 
\eqne
Here, the boundary term can be handled by reducing $\p_2$ derivatives (using \eqref{EQ10a}) and using the boundary condition in~\eqref{EQ112}. In the case of the $H^2$ estimate (i.e., $|\alpha | =2$) the terms with $|\beta| =1$ can also be handled by using~\eqref{logHk}. The most dangerous term is actually the case $\beta =0$ in the sum, in which case we need to estimate a term of the form 
\[\| D^2 u \|_4 \| \nabla \omega \|_4 \lec \| \nabla \omega \|_4^2.
\]
A natural idea for this would be to use the interpolation $\| \nabla \omega \|_4^2 \lec \| \omega \|_\infty \| D^2 \omega \|$, which would allow us to employ the $L^\infty$ control \eqref{om_Linfty} to close the estimate. However, an interpolation inequality of exactly such form is invalid in our domain, and actually an attempt to generalize it to our setting results with a term involving $\p_2 \omega$ over the \emph{outflow part $\widetilde{S}$ of the boundary}. Indeed, integration by parts gives
\[
\| \p_2 \omega \|_4^4 = \int \p_2 \omega (\p_2 \omega )^3 = - 3\int \omega  (\p_2 \omega )^2 \p_{22} \omega  - \int_{S}  \omega (\p_2 \omega )^3 + \int_{\widetilde{S}}  \omega (\p_2 \omega )^3 .
\]
Here the first term on the right-hand side clearly gives the desired interpolation inequality. While the second term can be handled using the boundary condition for $\omega$, the third one cannot, as we have no information on $\p_2 \omega $ on the outflow part $\widetilde{S}$ of the boundary. This failure of an interpolation inequality is not surprising---the growth of $\| \omega \|_{H^2}$ (or any subcritical norm of $\omega$) is expected to be double exponential (by the BKM condition), while an interpolation would give an exponential growth.\\

Interestingly, the corresponding boundary term in the estimate for $\| \nabla \omega \|_4$ appears with a negative sign (similarly to~\eqref{Hkest} above), see \eqref{W14_est}, and the estimate closes to give a double-exponential growth.
}
\end{Remark}

\noindent\texttt{Step 2.} We show that $\| \nabla \omega\unp \|_4 \leq J_2$ for all $n\geq 0$, $t\geq 0$. \\

To this end  we take the gradient of the PDE in \eqref{EQ112}, multiply by $\nabla \onn|\nabla \onn |^2  $ and integrate to obtain
\eqnb\label{W14_est}
\begin{split}
\frac14 \frac{\d }{\d t} \| \nabla \onn \|_4^4 & = - \int (u_i \p_i \nabla \onn )\cdot  \nabla \onn|\nabla \onn |^2  - \int (\nabla u_i^{(n)} \p_i  \onn  ) \cdot \nabla \onn|\nabla \onn |^2\\
&\leq \int_S | \nabla \onn |^4  + \| \nabla u^{(n)} \|_\infty \| \nabla \onn \|_4^4
.
\end{split}
\eqne

We recall the first order derivative reduction \eqref{EQ08}, which, in the case  of \eqref{EQ112}, gives
\eqnb\label{der_red1_onn}
\p_2 \onn = - \p_t \onn - u_1^{(n)} \p_1 \onn = - \p_t \eta - u_1^{(n)} \p_1 \eta 
\eqne
on $S$, 
so that 
\eqnb\label{temp1}
\| \nabla \onn \|_{L^4 (S)} \lec  J_2  (1+ \| u \|_{L^\infty (S)} ) \lec J_2 (1 + \| \nabla \on \|_4 ),
\eqne
where we used \eqref{un_LinftyS} and recalled, from above Theorem~\ref{T03}, that $J_k (t)$ denotes any constant which may depend on $N_i(s)$ for $i\in \{ 0, \ldots , k \}$ and $s\in [0,t]$.  

Using the log-Lipschitz inequality \eqref{logW14} and \eqref{om_Linfty}, we thus get 
\[
\begin{split}
 \frac{\d }{\d t} \| \nabla \onn \|_4^4 & \lec J_2 \left( \| \nabla \on \|_4^4 +   \| \nabla \onn \|_4^4 \log (2+ \| \nabla \on \|_4^4 ) \right) +J_2,  
\end{split}
\]
which shows that $\| \nabla \omega\unp \|_4$ grows in time at most double-exponentially, and so is bounded by $J_2$ for all times, as required. \\

We note in passing that Step 2 also implies \footnote{This is for convenience only, we could have instead bounded $\| \nabla u^{(n)}\|_\infty $ by $\| \omega \|_\infty$ times a logarithmic factor which could be incorporated directly into the $H^k$ estimate in \eqref{H2_temp} and \eqref{Hk_preest}}
\eqnb\label{un_W1infty}
\| u^{(n) } \|_{W^{1,\infty }} \lec J_2
.
\eqne

\begin{Remark}[The time derivatives]\label{R_time_ders}
{\rm 
Before we continue with the $H^k$ estimates, we note that we need to incorporate the time derivatives. To be precise, we need to  estimate $\| D^\alpha \on \|$ for all $\alpha\in \N_0^3$, not only those with $\alpha_3=0$. This is a consequence of the inflow vorticity, i.e., the first term on the right-hand side of \eqref{EQ18}, i.e., $\int_S (D^\alpha \omega )^2$. Indeed, if $\alpha_2>0$, this term requires us to use derivative reduction \eqref{EQ10a} (or rather its version for the $\omega\unp$ in \eqref{Dalphaonn} below) to ``replace'' $\p_2$ derivatives by the $\p_t$ and $\p_3$ derivatives. This way,  we can use the assumption \eqref{EQ140} on the incoming vorticity $\eta$ (if there are no $\p_2$ derivatives left) or obtain lower order term via a trace estimate (if there are some $\p_2$ derivatives). This will make us, in the following step, to use a double induction to estimate $\| D^\alpha \onn \|_{L^2(S)}$ and then use it to estimate $\| D^\alpha \onn \|$.
}
\end{Remark}

\noindent\texttt{Step 3.} We show that
\begin{eqnarray}
\| D^\alpha \onn \|_{L^2(S)} &\leq& J_k \left( 1+  \sum_{|\alpha'|=k,\, \alpha'_3 = \alpha_3} \| D^{\alpha'} \on \| \right), \label{L2Sa}\\
\| D^\alpha \onn \| &\leq&  J_k \label{L2Sb}
\end{eqnarray}
for all $n\geq 0$, $t\geq 0$, $k\geq 1$ and all $\alpha \in \N_0^3 $ such that $|\alpha |=k$.\\

The case $k=1$ follows from \eqref{EQ140}, \eqref{temp1}, and~\eqref{W14_est}. For $k\geq 2$ we assume that \eqref{L2Sa}--\eqref{L2Sb} hold for $1,\ldots , k-1$, and we prove \eqref{L2Sa} first. We proceed by induction with respect to $\alpha_2$. If $\alpha_2=0$ then $ \| D^\alpha \onn \|_{L^2(S)} \leq J_k$ for all $n\geq 0$, $t\geq 0$, by the assumption~\eqref{EQ140}. If $\alpha_2 >0$, we take $D^{\alpha-e_2}$ of the PDE in  \eqref{EQ112} (as in the derivative reduction \eqref{EQ10a}) to obtain
\eqnb\label{Dalphaonn}
\begin{split}
D^\alpha \onn = -D^{\alpha - e_2+e_3}  \onn  - D^{\alpha - e_2 } (u_1^{(n)} \p_1 \onn ) - \sum_{0<\beta  \leq \alpha - e_2 } {\alpha - e_2 \choose \beta } D^\beta  u_2^{(n)} D^{\alpha - \beta}  \onn   .
\end{split}
\eqne
Note that if all $k-1$ derivatives on the right-hand side of \eqref{Dalphaonn} fall  on $u^{(n)}$, then we can compute the $L^2 (S)$ norm directly by
\[
\begin{split}
\| D^{\alpha-e_2 } u^{(n)} \cdot \nabla  \onn \|_{L^2(S)} &\leq   \| D^{\alpha-e_2 } u_1^{(n)} \p_1 \eta  \|_{L^2(S)} + \| D^{\alpha-e_2 } u_2^{(n)} (\p_t \eta + u_1^{(n)} \p_1 \eta   )   \|_{L^2(S)} \\
&\lec_k  J_k + J_2 \sum_{i=1,2}\left( \| D^{\alpha -e_2+e_i} \on \| + \| D^{\alpha -e_2} \on  \| +\| D^{\alpha -e_2-e_3} \on \|\right)\\
&\lec J_{k}  + J_2  \sum_{\substack{|\alpha'|=k \\ \alpha'_3 = \alpha_3}} \| D^{\alpha'} \on \|  ,
\end{split}
\]
where, in the first step we used the first order derivative reduction \eqref{der_red1_onn} for $\p_2 \onn $, and, in the second step, we used the inductive assumption (with respect to $k$) to absorb the lower-order terms and we  recalled the Biot-Savart inequality \eqref{Hkdot}, as well as recalled  \eqref{EQ50}--\eqref{EQ49} to bound the lowest order term $\| D^{\alpha -e_2 } u \|$ by $\| D^{\alpha - e_2} \on \| + \| D^{\alpha -e_2-e_3} \on \|$ (where the last term arises from $\int_\Omega D^{\alpha-e_2} u_1$ if $\alpha_3>0$). We also used the inductive assumption (with respect to $k$) in the last step.

Moreover, note also that we can bound the $L^2(S)$ norm of all terms on the right-hand side of \eqref{Dalphaonn} with $k$ derivatives on $\onn$ by
\[
\| D^{\alpha - e_2 +e_3 } \onn \|_{L^2(S)} + \| u_1^{(n)} D^{\alpha - e_2 +e_1 } \onn \|_{L^2(S)} \leq J_2 \sum_{\substack{|\alpha'|= k \\  \alpha'_2 \leq \alpha_2-1} } \| D^{\alpha'} \onn \|_{L^2 (S)} \lec J_k,
\]
where we used \eqref{un_W1infty} in the first inequality and the inductive assumption (with respect to $\alpha_2$) in the second inequality.

Applying these two observations, as well as the inductive assumption (with respect to both $\alpha_2$ and $k$) to handle the first term on the right-hand side of \eqref{Dalphaonn}, we can now compute the $L^2(S)$ norm of \eqref{Dalphaonn},
\[\begin{split}
\| D^\alpha \onn \|_{L^2 (S)} &\lec_k J_k +J_2  \sum_{|\alpha'| = |\alpha |} \| D^{\alpha'} \on \| + J_2 \sum_{\substack{0<\beta < \alpha-e_2 \\ | \beta | = |\alpha -2|}} \| D^\beta u^{(n)} \|_\infty \| D^{\alpha - \beta } \onn \|_{L^2(S)} \\
&+J_2 \sum_{\substack{0<\beta < \alpha-e_2 \\ |\beta |\leq |\alpha |-3}}\,\,\, \underbrace{\| D^\beta u^{(n)} \|_\infty }_{\lec \| D^\beta \on \|_{H^2} \leq J_{k-1} }\underbrace{\| D^{\alpha - \beta } \onn \|_{L^2(S)}}_{\leq J_{k-1} }\\
&\leq J_k \left( 1+ \sum_{|\alpha'|=k, \alpha'_3 = \alpha_3} \| D^{\alpha'} \on \| \right)  + J_2 \sum_{\substack{0<\beta < \alpha-e_2 \\ | \beta | = |\alpha -2|}} \| D^\beta u^{(n)} \|_{H^2}
\\&
%\leq J_k \left( 1+ \sum_{|\alpha'|=k, \alpha'_3 = \alpha_3} \| D^{\alpha'} \on \| \right) ,
\leq J_k + J_k \sum_{|\alpha'|=k, \alpha'_3 = \alpha_3} \| D^{\alpha'} \on \|
,
\end{split}
\]
as required, where, in the second inequality, we noted that $\| D^{\alpha -\beta } \onn \|_{L^2(S)} \leq J_2$ if $k\geq 3$ (by the inductive assumption with respect to $k$), and that this term is excluded from the summation if $k=2$. Moreover, in the last inequality we used the inductive assumption (with respect to $k$) to absorb the lower order terms resulting from \eqref{Hkdot}--\eqref{EQ49} into~$J_k$.\\

Having established \eqref{L2Sa} for $|\alpha |=k$, we now show~\eqref{L2Sb}. Let $k=2$. We proceed by induction with respect to~$\alpha_3$. If $\alpha_3=0$ we use  \eqref{W14_est} and \eqref{L2Sa} to continue  \eqref{Hkest} as follows
\eqnb\label{H2_temp}
\begin{split}
\frac{\d }{\d t } \| D^\alpha \onn \|^2 &\lec  \| D^\alpha \onn \|_{L^2 (S)}^2 +  \sum_{\beta < \alpha } {\alpha \choose \beta }  \| D^{\alpha - \beta } u^{(n)} \cdot \nabla D^\beta \onn  \|\, \|  \onn \|_{H^2}\\
& \lec  J_2 (1+ \| \on \|_{H^2}^2 )+\left( \| D^\alpha u^{(n)} \|_4 \| \nabla \onn \|_4+ \| \nabla u^{(n)} \|_\infty \| \onn \|_{H^2} \right)  \|  \onn \|_{H^2}\\
& \lec J_2 \left( 1+\|  \on \|_{H^2}+\|  \onn \|_{H^2} \right)^2 ,
\end{split} 
\eqne
where we used \eqref{un_W1infty} and \eqref{W14_est} (again) in the last step. By induction in $n$ this shows that 
\eqnb\label{H2_est}
\| \on \|_{H^2} \lec J_2 \qquad \text{ for all }t\geq 0, n\geq 0,
\eqne
which completes the proof of the case $\alpha_3=0$. If $\alpha_3>0$, the claim follows from the PDE in \eqref{EQ112}, the inductive assumption (with respect to $\alpha_3$), and induction in~$n$. 

It remains to prove \eqref{L2Sb} in the case $k\geq 3$. Similarly to the case $k=2$, we proceed by induction in~$\alpha_3$. If $\alpha_3=0$, then
\eqnb\label{Hk_preest}
\begin{split}
\frac{\d }{\d t } \| D^\alpha \onn \|^2 &\lec  J_k \left( 1 + \| \on \|_{H^k}^2\right)  +  \sum_{\beta < \alpha } {\alpha \choose \beta }  \| D^{\alpha - \beta } u^{(n)} \cdot \nabla D^\beta \onn  \|\, \|  \onn \|_{H^2}
,
\end{split} 
\eqne
where we used \eqref{L2Sa} to obtain the first term on the right-hand side. As for the terms inside the sum, we have 
\[
\| D^\alpha u^{(n)} \cdot \nabla \onn \| \leq \|  u^{(n)} \|_{H^k} \| \nabla \onn \|_\infty \lec J_{k-1} \| \onn \|_{H^3} \lec J_{k-1} \| \onn \|_{H^k} 
\]
for $\beta=0$, 
\[
\| D^{\alpha -\beta } u^{(n)} \cdot \nabla D^\beta  \onn \| \leq \|    \nabla u^{(n)} \|_{W^{k-2,4}} \| \nabla D^\beta \onn \|_4  \lec_k \| \on \|_{H^{k-1} } \| \onn \|_{H^3}\leq J_{k-1} \| \onn \|_{H^k }  
\]
for $|\beta |=1$, and
 \[
\| D^{\alpha-\beta } u^{(n)} \cdot \nabla D^\beta \onn \| \leq \| D^{\alpha - \beta } u^{(n)} \|_{\infty } \| \nabla D^\beta  \onn \| \lec \| \nabla u^{(n)} \|_{H^{k-|\beta | +1}} \| \onn \|_{H^k}\leq J_{k-1} \| \onn \|_{H^k }
\]
for $0<\beta <\alpha$, $|\beta |\geq 2$. Applying these in \eqref{Hk_preest} we get 
 \[
\frac{\d }{\d t } \| D^\alpha \onn \|^2 \lec  J_k \left( 1+ \| \on \|_{H^k} + \| \onn \|_{H^k} \right)^2
\]
for all $|\alpha |=k$, which proves \eqref{L2Sb} in the case $k\geq 3$ and $\alpha_3=0$, due to Gronwall's inequality. Similarly to the $k=2$ case, the PDE in \eqref{EQ112}, the inductive assumption (with respect to $\alpha_3$) and induction in $n$, gives  \eqref{L2Sb} for $k\geq 3$ and $\alpha_3>0$, as required. \smallskip\\

\noindent\texttt{Step 4.} We use the global bounds \eqref{L2Sa}--\eqref{L2Sb} to construct the global-in-time solution claimed by Theorem~\ref{T01}. \\

We set
\[
\tom^{(n)} \coloneqq \omega^{(n)}-\omega^{(n-1)},\qquad  \tu\un \coloneqq  u\un-u\unm = u[ \tom\un ].
\]
Note that we have 
\begin{align}
\begin{split}
    &\tom\un|_{S}=0,
    \\&
    \tom\un(0)=0,
    \\&
    \int_{\Omega}\tu_1\un\,\d\sigma=\int_0^t\int_{\Omega}\tom\un,
\end{split}
    \label{EQ127}
\end{align}
and $\tom\unp$ satisfies
\begin{align}
           \partial_t\tom\unp + u\un\cdot \nabla \tom\unp +\tu\unp \cdot \nabla \omega\unp=0.
    \label{EQ129}
\end{align}
In particular, this gives that
\eqnb\label{tom1}
\p_2 \tom\unp = \underbrace{- \p_t \tom\unp - u_1\un \p_1 \tom\unp}_{=0} - \tu\unp \cdot \nabla \omega\unp
\eqne
on~$S$. More generally, for each $\alpha $ with $\alpha_2>0$,
\eqnb\label{tom2}
D^\alpha  \tom\unp = -D^{\alpha-e_2 + e_3} \tom\unp - D^{\alpha-e_2} (u\un \cdot \nabla  \tom\unp )+ u_2\un D^\alpha \tom  -D^{\alpha-e_2} ( \tu\unp \cdot \nabla \omega\unp )
.
\eqne
Thus, using the uniform bounds \eqref{L2Sa}--\eqref{L2Sb}, we get
\eqnb\label{tom3}
\| D^\alpha \tom\unp \|_{L^2(S)} \leq J_{k+1} \sup_{|\beta | \leq k } \| D^\beta \tom\unp \| 
\eqne
for all $|\alpha | =k$, $k\geq 2$, $n \geq 0$, $t\geq 0$. Indeed, \eqref{tom3} follows by induction in $k$, where, for each $k\geq 2$, (similarly to the proof of~\eqref{L2Sa}) the claim is proved by induction with respect to $\alpha_2 \in \{ 0, \ldots , k \}$. 

Furthermore, we have
\eqnb\label{tom4}
\frac{\d }{\d t}
\| D^\alpha \tom\unp \| \leq J_{k+1} \left(  \sup_{|\beta | \leq k } \| D^\beta \tom\unp \|
+ \sup_{|\beta | \leq k } \| D^\beta \tom\un \|  \right)
\eqne
for all $|\alpha | =k$, $k\geq 2$, $n \geq 0$, $t\geq 0$. Indeed, \eqref{tom4} follows in a similar way as \eqref{L2Sb}: We use the induction with respect to $k\geq 2$, where, for each $k\geq 2$, we use the induction with respect to~$\alpha_3$. Then \eqref{tom3} lets us control $ \| D^\alpha \tom\unp \|_{L^2(S)}$, and the bounds \eqref{L2Sa}--\eqref{L2Sb} let us control the terms arising from $u\un$ and $\omega\unp$ appearing in~\eqref{EQ129}. We omit the details, but we emphasize that the subindex $k+1$ in \eqref{tom4} arises from the derivative loss from the term $\tu\unp\cdot \nabla \omega\unp$ in~\eqref{tom1}.

We now fix $k\geq 2$ and set 
\[\tilde{y}^{(n)}\coloneqq \sup_{|\alpha |\leq k} \|D^\alpha \tom^{(n)}\|,\]
so that \eqref{tom4} gives that 
\[
 \tilde{y}^{(n+1)} (t_1) - \tilde{y}^{(n+1)} (t_0) 
 \leq \int_{t_0}^{t_1} J_{k+1} \left( \tilde{y}^{(n+1)} + \tilde{y}^{(n)} \right)
\]
for all $t_1>t_0\geq 0$, $n\geq 1$, which is an integral representation of 
\begin{align}
    \frac{\d}{\d t}\left( \ee^{-\int_0^t J_{k+1} }\tilde{y}\unp  \right)\leq J_{k+1} \ee^{-\int_0^t J_{k+1}}\tilde{y}\un ,
    \label{EQ131}
\end{align}
from where, integrating $n$ times in $t$ we obtain that 
\begin{align}
  \tilde{y}^{(n+1)}(t)\leq \ee^{\int_0^t J_{k+1}}
\int_0^t J_{k+1}(s)\ee^{-\int_0^s J_{k+1}}\tilde{y}^{(1)}(s)\frac{\left(\int_s^t J_{k+1}\right)^{n-1}}{(n-1)!}\,\text{d}s .
    \label{EQ132}
\end{align}
Thus, since the right-hand side is summable in $n$ for each $t\geq 0$, we see that 
\[
D^\alpha \tom^{(1)} + \sum_{n\geq 1} D^\alpha \tom\un \qquad \text{ converges  in } C([0,t];L^2)
\]
for every $t\geq 0$ and $\alpha\in \N_0^3 $ such that $|\alpha |\leq k$. Since  $C([0,t];L^2)$ is a Banach space, this implies that
\[
D^\alpha \omega^{n} \qquad \text{ converges to  } \omega \text{ in } C([0,t];L^2)
\]
for all such $t,\alpha$, for some $\omega \in C([0,\infty );L^2)$, such that $D^\alpha \omega \in C([0,\infty );L^2)$ for all $\alpha \in \N_0^3$ such that $|\alpha |\leq k$. Taking the limit $n\to \infty $ in \eqref{EQ112} gives the Euler equation for $\omega$ with velocity field $u[\omega ]$, as required.

In order to verify the velocity formulation \eqref{EQ01}, we note that $   F\coloneqq \partial_t u+u\cdot\nabla u$ satisfies
$\curl F=0$ and 
\begin{align}
\begin{split}
    \int_\Omega  F_1 
    &
    =\int_\Omega (\partial_t u_1+u_1\partial_1u_1+u_2\partial_2u_1)
    \\&
    = \frac{\d }{\d t} \int_\Omega u_1+\underbrace{\int_\Omega\frac{1}{2}\partial_1 u^2_1}_{=0}+\int_{\tilde{S}}u_1-\int_{S}u_1-\int_\Omega\underbrace{\partial_2u_2}_{=-\p_1 u_1}u_1
    \\&
    = \frac{\d }{\d t} \int_\Omega u_1+\int_{\tilde{S}}u_1-\int_{S}u_1 +\underbrace{ \int_\Omega \p_1 u_1^2}_{=0}
    = \frac{\d }{\d t}\int_\Omega u_1+\int_\Omega \partial_2u_1 -\underbrace{\int_\Omega \partial_1u_2}_{=0}
    \\&
    = \frac{\d }{\d t}\left(\int_\Omega u_1-\int_0^t\int_\Omega\omega\right)
    =0
    \end{split}
    \label{EQ136}
\end{align}
for each $t>0$. This lets us use the De Rham Lemma~\ref{L05} to deduce that $F=\nabla p$ for some pressure function $p$, concluding the proof.

\section{Induction for boundary Gevrey coefficients $a_m$}
\label{sec_bdry_coeffs}

Here we prove~\eqref{bdry_coefs}. Namely, we assume that 
\eqnb\label{bdry_c_as}
\epsilon_2 (t) \leq \min \left( \frac{\epsilon_3}{4K(t)}, \frac{\epsilon_1}{4}\right),\qquad 
\tau (t) \leq \frac{1}{C_s  K^3  (t)},
\eqne
where $C_s$ is from \eqref{Cs_def}, and we show that, for each $m\geq 3$, $b_m \leq K (t)$ implies that $a_{m+1} \leq K (t)$. For brevity, we will simply write $K\equiv K(t)$. \\

To this end, we first note that the vorticity equation \eqref{EQ07} implies that 
$\eta_t + u_1\partial_{1}\eta + u_2\partial_{2}\omega=0$ on $S$,
which gives the expression
  \begin{equation}
   \partial_{2}\omega
   = - \eta_t - u_1\partial_{1}\eta
   \onon{S}
   .
   \label{EQ08}
  \end{equation}
To obtain an identity for higher order derivatives
of $\omega$ on $S$, we proceed as follows.
Denote by $\alpha=(\alpha_1,\alpha_2,\alpha_3)\in\mathbb{N}_{0}^3{}$
an arbitrary multiindex. With the notation
$\partial^{\alpha}=\partial_{1}^{\alpha_1}\partial_{2}^{\alpha_2}\partial_{t}^{\alpha_3}$,
we have
  \begin{equation}
   \partial^{\alpha+e_3}\omega
   + \partial^{\alpha} (u_1\partial_{1}\omega)
   + \partial^{\alpha}(u_2\partial_2 \omega)=0
   \onon{\Omega},
   \label{EQ09}
  \end{equation}
which leads to
\begin{equation}
   u_2\partial^{\alpha+e_2}\omega
   =
   - \partial^{\alpha+e_3}\omega
   - \partial^{\alpha}(u_1\partial_{1}\omega)
   -
   \bigl(
    \partial^{\alpha}(u_2\partial_2\omega)
    - u_2\partial^{\alpha+e_2}\omega
   \bigr)
   \onon{\Omega}
   .
   \label{EQ10b}
  \end{equation}
Restricting to $S$, we obtain
\begin{equation}
   \partial^{\alpha+e_2}\omega
   =
   - \partial^{\alpha+e_3}\omega
   - \partial^{\alpha}(u_1\partial_{1}\omega)
   -
   \bigl(
    \partial^{\alpha}(u_2\partial_2\omega)
    - u_2\partial^{\alpha+e_2}\omega
   \bigr)
   \onon{S}
   ,
   \label{EQ10a}
  \end{equation}
which is used in the sequel.\\

In order to show that $a_{m+1}\leq K$, it suffices to verify the claim for each $a_{\alpha'}$, where $|\alpha' |= m+1$. We use induction with respect to~$\alpha'_2$. The base case $\alpha'_2=0$ follows from \eqref{a_K_bound}, while for $\alpha'_2\geq 1$ we use the derivative reduction formula \eqref{EQ10a} to write $\alpha'=\alpha +e_2$, so that 
\[
    \begin{split}
    a_{\alpha+e_2}&
    =\frac{\epsilon^{\alpha+e_2}\tau^{|\alpha|-1}}{(|\alpha|+1)!^s}\|D^{\alpha+e_2}\omega\|_{L^2(S)}
    \\&
    =
    \frac{\epsilon^{\alpha+e_2}\tau^{|\alpha|-1}}{(|\alpha|+1)!^s}
    \bigl\|
    D^{\alpha+e_3}\omega
    +D^{\alpha}(u_1\partial_{1}\omega)
    +
   \bigl(
    D^{\alpha}(u_2\partial_2\omega)
    - u_2 D^{\alpha+e_2}\omega
   \bigr)\bigr\|_{L^2(S)}
,\\
&\leq \frac{\epsilon_2}{\epsilon_3} \underbrace{\frac{\epsilon^{\alpha+e_3}\tau^{|\alpha|-1}}{(|\alpha|+1)!^s}
    \|D^{\alpha}\partial_t\omega\|_{L^2(S)}}_{ =a_{\alpha + e_3}}
+\frac{\epsilon_2}{\epsilon_1}\underbrace{\|u\|_{\lis} }_{\leq K} \underbrace{ \frac{\epsilon^{\alpha+e_1}\tau^{|\alpha|-1}}{(|\alpha|+1)!^s}
    \|D^{\alpha+e_1}\omega\|_{\ls}}_{=a_{\alpha +e_1}} \\
    &\,\,\,    +\frac{\epsilon^{\alpha+e_2}\tau^{|\alpha|-1}}{(|\alpha|+1)!^s}
    \sum_{i=1,2}
    \sum_{0< \beta<\alpha}\binom{\alpha}{\beta}\|D^\beta u\|_{\lis}
    \|D^{\alpha-\beta+e_i}\omega\|_{\ls}
    \\&\,\,\, 
       +\frac{\epsilon^{\alpha+e_2}\tau^{|\alpha|-1}}{(|\alpha|+1)!^s}\|D^\alpha u\|_{\ls}
    \| \nabla \omega\|_{\lis}
    ,
    \end{split}
     \]
where, in the last step, we recalled \eqref{EQ171a} and noted that the second and third terms inside the preceding norm have a similar structure and can thus be treated similarly.  We note that the  second and the last terms on the right-hand side correspond to the cases ($i=1$, $\beta = 0$) and $\beta =\alpha$, respectively.
Using \eqref{bdry_c_as} and the Sobolev inequality $\| f \|_{L^\infty (S)} \lec \| f \|_{H^3} \lec \| f \| + \|\nabla^2 f \|$, to get
\[
\begin{split}
    a_{\alpha+e_2}
    &\leq
    \frac{1}{4}a_{\alpha+e_3} + \frac{1}{4}  a_{\alpha+e_1}    +C \frac{\epsilon^{\alpha+e_2}\tau^{|\alpha|-1}}{(|\alpha|+1)!^s}
    \sum_{i=1,2}
    \sum_{0<\beta<\alpha} \binom{\alpha}{\beta}
    \|D^\beta u\|
    \|D^{\alpha-\beta+e_i}\omega\|_{\ls}
    \\&\indeq
    +C \frac{\epsilon^{\alpha+e_2}\tau^{|\alpha|-1}}{(|\alpha|+1)!^s}
    \sum_{i=1,2}
    \sum_{0<\beta<\alpha} \binom{\alpha}{\beta}
    \|D^\beta \nabla^2 u\|
    \|D^{\alpha-\beta+e_i}\omega\|_{\ls}
    \\&\indeq
    +C \frac{\epsilon^{\alpha+e_2}\tau^{|\alpha|-1}}{(|\alpha|+1)!^s}
    \left(\|D^\alpha u\|+\|D^\alpha \nabla u\|\right)
    \|\omega\|_{H^3}
    \\&
    =: \frac{1}{4}a_{\alpha+e_3} + \frac{1}{4}  a_{\alpha+e_1}   +I_1+I_2+I_3.
       \end{split}
\]
Next, we combine the  velocity estimates \eqref{EQ47}--\eqref{EQ67} to bound the contributions of
$\|D^\beta u\|_{\lo}$ and $\|D^\beta\nabla^2u\|_{\lo}$ separately.
For $I_1$, we have
\begin{align}
\begin{split}
    I_1
    &\lec
      \tau^2
      \sum_{i=1,2}
      \sum_{0<\beta<\alpha,|\beta|\geq 2}
      \underbrace{\frac{\epsilon^\beta\tau^{|\beta|-2}}{|\beta|!^s}\|D^\beta u\|}_{\leq b_{m-1}\leq K }
      \frac{\epsilon_2}{\epsilon_i}
    \underbrace{\frac{\epsilon^{\alpha-\beta+e_i}\tau^{|\alpha|-|\beta|-1}}
    {(|\alpha|-|\beta|+1)!^s}
    \|D^{\alpha-\beta+e_i}\omega\|_{\ls}}_{
    =a_{\alpha - \beta +e_i } \leq K}
    \binom{\alpha}{\beta}\binom{|\alpha|+1}{|\beta|}^{-s}
    \\&\indeq
    +
    \tau
    \sum_{i=1,2}\sum_{|\beta|=1}
    \frac{\epsilon^\beta}{|\beta|!^s}
  \underbrace{  \|D^\beta u\|}_{ \leq K }
    \frac{\epsilon_2}{\epsilon_i}
   \underbrace{ \frac{\epsilon^{\alpha-\beta+e_i}\tau^{|\alpha|-|\beta|-1}}{(|\alpha|-|\beta|+1)!^s}
    \|D^{\alpha-\beta+e_i}\omega\|_{\ls}}_{ \leq K}
    \binom{\alpha}{\beta}\binom{|\alpha|+1}{|\beta|}^{-s}
    \\&
    \lec
   K^2  \tau  \sum_{ \beta < \alpha , |\beta|=1}\binom{\alpha}{\beta}\binom{|\alpha|+1}{|\beta|}^{-s}
    ,
    \label{EQ76}
    \end{split}
\end{align}
where we used \eqref{EQ67} and   \eqref{EQ171a}  in the second step.
Proceeding further, we employ the combinatorial identity \eqref{EQ77}, together with the inequalities~\eqref{EQ38}, we get
\begin{align}
    I_1
      \lec_s \tau K^2 
      .    
    \label{EQ78}
\end{align}

Note that, as opposed to $I_1$, the derivatives of factors
in $I_2$ always contain at least one spatial derivative, and so \eqref{EQ70} is not needed.
The difference between $I_1$ and $I_2$ is that in $I_1$ the derivatives $D^\beta$ may consist entirely of time derivatives, in which case we need to use~\eqref{EQ70}. In contrast, in $I_2$, the term $D^\beta\nabla^2u$ necessarily contains spatial derivatives, and thus \eqref{EQ67} suffices.
Proceeding similarly as for $I_1$, we estimate $I_2$ as 
  \begin{align}
    \begin{split}
    I_2&= C \frac{\epsilon^{\alpha+e_2}\tau^{|\alpha|-1}}{(|\alpha|+1)!^s}
    \sum_{i=1,2}
    \sum_{0<\beta<\alpha} \binom{\alpha}{\beta}
    \|D^\beta \nabla^2 u\|_{\lo}
    \|D^{\alpha-\beta+e_i}\omega\|_{\ls}
    \\&\lec 
    \sum_{0<\beta<\alpha}\sum_{i,j,l=1,2}\underbrace{
    \frac{\epsilon^{\beta+e_j+e_l}\tau^{(|\beta|+2)-2}}{(|\beta|+2)!^s}\|D^\beta \partial_j \partial_l u\|}_{\leq \tau |\alpha|^{-2}b_m \leq \tau |\alpha |^{-s} K} \underbrace{
    \frac{\epsilon^{\alpha-\beta+e_i}\tau^{|\alpha-\beta|-1}}{(|\alpha-\beta|+1)!^s}
    \|D^{\alpha-\beta+e_i}\omega\|_{\ls}}_{ = a_{\alpha -\beta +e_i}\leq K }
    \\&\indeq\times
    \binom{\alpha}{\beta}
    \binom{|\alpha|}{|\beta|}^{-s}
    \frac{(|\beta|+1)^{s}(|\alpha-\beta|+1)^{s}}{(|\alpha|+1)^s}
    (|\beta|+2)^s
    \frac{\epsilon^{e_2}}{\epsilon^{e_i+e_j+e_l}}
    \\&\lec 
     |\alpha|^{-s}\tau K^2 
     \sum_{0<|\beta|<|\alpha|}
    \binom{|\alpha|}{|\beta|}^{1-s}
    \frac{(|\beta|+1)^{s}(|\alpha-\beta|+1)^{s}}{(|\alpha|+1)^s}
    (|\beta|+2)^s
    \underbrace{\sum_{i,j,l=1,2}\frac{\epsilon^{e_2}}{\epsilon^{e_i+e_j+e_l}}}_{\lec K^2},
    \label{EQ81}
    \end{split}
  \end{align}
  where we also used the combinatorial inequality \eqref{EQ77} and \eqref{EQ67} in the last step. 
Using the binomial inequality \eqref{EQ30}, we thus get 
\begin{align}
    \begin{split}
    I_2&   
    \leq
  C\tau    K^4
    \underbrace{
    \sum_{1\leq k\leq|\alpha|-1}\binom{|\alpha|}{|k|}^{1-s}
    \frac{(k+1)^s(|\alpha|-k+1)^s}{(|\alpha|+1)^s}
    }_{\lec_s 1}
    \underbrace{ \frac{(k+2)^s}{|\alpha|^s}}_{ \lec_s 1}
    \\&\leq
     C_s \tau K^4 .
        \label{EQ83}
    \end{split}
\end{align}

Finally, we bound $I_3$ as
\begin{align}
\begin{split}
    I_3
    &=
    C\frac{\epsilon^{\alpha+e_2}\tau^{|\alpha|-1}}{(|\alpha|+1)!^s}
    \left(\|D^\alpha u\|+\|D^\alpha \nabla u\|\right)
  \underbrace{  \|\omega\|_{H^3}}_{\leq K}
    \\&\lec 
    K \left(
    \frac{\epsilon_2\tau}{(|\alpha|+1)^s}
    \underbrace{\frac{\epsilon^\alpha\tau^{|\alpha|-2}}{(|\alpha|)!^s}
    \|D^\alpha u\|}_{\leq b_{m} \leq K }
    +\sum_{i=1,2}\frac{\epsilon_2}{\epsilon_i}
   \underbrace{ \frac{\epsilon^{\alpha+e_i}\tau^{|\alpha|-1}}{(|\alpha|+1)!^s}\|D^{\alpha+e_i}u\|}_{\leq \tau |\alpha |^{-s} b_{m} }
    \right)
    \\&
    \lec K^2  \tau|\alpha|^{-s}
    ,
    \label{EQ85}
    \end{split}
\end{align}
where we used \eqref{EQ67} in the last step. Adding \eqref{EQ78}, \eqref{EQ83}, and \eqref{EQ85}, we thus obtain
\eqnb\label{Cs_def}
a_{\alpha + e_2} \leq \frac{1}{4}a_{\alpha+e_3} + \frac{1}{4}  a_{\alpha+e_1}   + C_s \tau K^4 \leq K,
\eqne
as required, where we used the inductive assumption that $a_{\alpha+e_1},a_{\alpha+e_3} \leq K$ and the smallness assumption \eqref{bdry_c_as} on $\tau$ in the last inequality.

\section{The non-linear term}
\label{sec_nonlin}

Here we show that
\eqnb\label{nonlin_to_show}
        \max_{|\alpha|=m+1} \NL(\alpha) 
	\lec
         m  K^4  b_{m+1}^2 
    \eqne
for all $m\geq 0$ and $\alpha \in \N_0^3$ such that $|\alpha |=m+1$, provided $b_m \leq K$. As in the previous section, we use the notation $K\equiv K(t)$, for brevity.

We first apply H\"older’s inequality to deduce that
\begin{align}
    \begin{split}
        \NL(\alpha)
        &\leq
    \frac{\epsilon^{2\alpha}\tau^{2|\alpha|-4}}{|\alpha|!^{2s}}\sum_{\substack{0<\beta<\alpha\\2\leq|\beta|\leq|\alpha|-2}}\binom{\alpha}{\beta}
    \|D^\beta u\|_{\infty}\|D^{\alpha-\beta}\nabla\omega\|\|D^\alpha\omega\|
    \\&\indeq
    +\frac{\epsilon^{2\alpha}\tau^{2|\alpha|-4}}{|\alpha|!^{2s}}
    \|D^\alpha u\|_{4}\|\nabla\omega\|_{4}\|D^\alpha\omega\|
    \\&\indeq
    +\frac{\epsilon^{2\alpha}\tau^{2|\alpha|-4}}{|\alpha|!^{2s}}
    \sum_{|\beta|=1}\binom{\alpha}{\beta}
    \|D^\beta u\|_{\infty}\|D^{\alpha-\beta}\nabla\omega\|\|D^\alpha\omega\|
    \\&\indeq
    +\frac{\epsilon^{2\alpha}\tau^{2|\alpha|-4}}{|\alpha|!^{2s}}
    \sum_{|\beta|=|\alpha|-1}\binom{\alpha}{\beta}
    \|D^\beta u\|_{4}\|D^{\alpha-\beta}\nabla\omega\|_{4}\|D^\alpha\omega\|.
    \end{split}
\end{align}
Using Sobolev inequality, we obtain
\begin{align}
\begin{split}
    \NL(\alpha)&
    \lec \frac{\epsilon^{2\alpha}\tau^{2|\alpha|-4}}{|\alpha|!^{2s}}\sum_{\substack{0<\beta<\alpha\\2\leq|\beta|\leq|\alpha|-2}}
    \sum_{i=1,2}\binom{\alpha}{\beta}
    \left(\|D^\beta u\|+\|D^\beta\nabla^2u\|\right)
      \|D^{\alpha-\beta}\partial_i\omega\|\|D^\alpha\omega\|
    \\&\indeq+ \frac{\epsilon^{2\alpha}\tau^{2|\alpha|-4}}{|\alpha|!^{2s}}\sum_{i,j,k=1,2}
    \left(\|D^\alpha u\|+\|D^\alpha \partial_iu\|\right)
    \left(\|\partial_j\omega\|+\|\partial_j\partial_k\omega\|\right)\|D^\alpha\omega\|
    \\&\indeq
    +\frac{\epsilon^{2\alpha}\tau^{2|\alpha|-4}}{|\alpha|!^{2s}}
    \sum_{|\beta|=1}\binom{\alpha}{\beta}
    \|D^\beta u\|_{\infty}\|D^{\alpha-\beta}\nabla\omega\|\|D^\alpha\omega\|
    \\&\indeq
    +\frac{\epsilon^{2\alpha}\tau^{2|\alpha|-4}}{|\alpha|!^{2s}}
    \sum_{|\beta|=|\alpha|-1}\binom{\alpha}{\beta}
    \|D^\beta u\|_{4}\|D^{\alpha-\beta}\nabla\omega\|_{4}\|D^\alpha\omega\|
    \\&=:(\NL_{1,1} + \NL_{1,2} )+ \NL_2+ \NL_3+ \NL_4.
    \label{EQ92}
    \end{split}
\end{align}
We next proceed to estimate each of the nonlinear terms individually. First,
\begin{align}
    \begin{split}
    \NL_{1,1}&=
    \frac{\epsilon^{2\alpha}\tau^{2|\alpha|-4}}{|\alpha|!^{2s}}\sum_{\substack{0<\beta<\alpha\\2\leq|\beta|\leq|\alpha|-2}}
    \sum_{i=1,2}\binom{\alpha}{\beta}
    \|D^\beta u\|\|D^{\alpha-\beta}\partial_i\omega\|\|D^\alpha\omega\|
    \\&=
    \frac{\epsilon^\alpha\tau^{|\alpha|-2}}{|\alpha|!^s}\|D^\alpha\omega\|
    \sum_{\substack{0<\beta<\alpha\\2\leq|\beta|\leq|\alpha|-2}}
    \frac{\epsilon^{\beta}\tau^{|\beta|-2}}{|\beta|!^s}\|D^\beta u\|
    \\&\indeq    \times
    \sum_{i=1,2}\frac{\epsilon^{\alpha-\beta+e_i}\tau^{|\alpha-\beta|-1}}{|\alpha-\beta+1|!^s}
    \|D^{\alpha-\beta}\partial_i\omega\|
    \frac{\tau}{\epsilon_i}\binom{\alpha}{\beta}\binom{|\alpha|}{|\beta|}^{-s}|\alpha-\beta+1|^s
    \\&=
    b_{\alpha}\sum_{\substack{0<\beta<\alpha\\2\leq|\beta|\leq|\alpha|-2}}\frac{\epsilon^{\beta}\tau^{|\beta|-2}}{|\beta|!^s}\|D^\beta u\|
    \sum_{i=1,2}b_{\alpha-\beta+e_i}\frac{\tau}{\epsilon_i}\binom{\alpha}{\beta}\binom{|\alpha|}{|\beta|}^{-s}|\alpha-\beta+1|^s
    .
    \label{EQ93}
    \end{split}
\end{align}
Employing the estimates~\eqref{EQ29},  \eqref{EQ67}, and \eqref{EQ70} we derive
\begin{align}
    \begin{split}
        \NL_{1,1}
        &\lec K b_{m+1}^2  \sum_{0<\beta<\alpha} \sum_{i=1,2}   \frac{\tau}{\epsilon_i} \binom{\alpha}{\beta} \binom{|\alpha|}{|\beta|}^{-s}|\alpha-\beta+1|^s
        \\&
        \lec K^2 b_{m+1}^2 \sum_{i=1,2}\frac{\tau}{\epsilon_i}|\alpha|
        \sum_{k=1}^{|\alpha|-1}\binom{|\alpha|}{k}^{1-s}\frac{(|\alpha|-k+1)^s}{|\alpha|}
        \\&
        \lec K^3|\alpha|\tau b_{m+1}^2,
        \label{EQ94a}
    \end{split}
\end{align}
where we used the definition \eqref{eps_rel} of $\epsilon_2$ in the last step. The distinction between $\NL_{1,1}$ and $\NL_{1,2}$ is that $\|D^\beta \nabla^2u\|$ contains spatial derivatives, allowing us to use the stronger estimate~\eqref{EQ67}. Thus,
\begin{align}
    \begin{split}
    \NL_{1,2}
    &=\frac{\epsilon^{2\alpha}\tau^{2|\alpha|-4}}{|\alpha|!^{2s}}\sum_{\substack{0<\beta<\alpha\\2\leq|\beta|\leq|\alpha|-2}}\sum_{i=1,2}\binom{\alpha}{\beta}
    \|D^\beta \nabla^2u\|\,\|D^{\alpha-\beta}\partial_i\omega\|\,\|D^\alpha\omega\|
    \\&
    =\underbrace{\frac{\epsilon^\alpha\tau^{|\alpha|-2}}{|\alpha|!^s}\|D^\alpha\omega\|}_
    {= b_{\alpha}\leq b_{m+1} }\sum_{i,j,k=1,2}
    \sum_{\substack{0<\beta<\alpha\\2\leq|\beta|\leq|\alpha|-2}}\underbrace{\frac{\epsilon^{\beta+e_j+e_k}\tau^{|\beta|+2-2}}{(|\beta|+2)!^s}\|D^\beta\partial_j \partial_ku\|}_{ \leq \tau |\alpha |^{-s} b_m \leq \tau |\alpha  |^{-s} K}
    \\&\indeq
    \times\underbrace{ \frac{\epsilon^{\alpha-\beta+e_i}\tau^{|\alpha-\beta|-1}}{\left(|\alpha-\beta|+1\right)!^s}\|D^{\alpha-\beta}\partial_i\omega\|}_{= b_{\alpha - \beta +e_i}\leq b_{m+1}} \binom{\alpha}{\beta}\binom{|\alpha|}{|\beta|}^{-s}\frac{(|\beta|+1)^s(|\beta|+2)^s(|\alpha-\beta|+1)^s}
    {\tau \,\epsilon_i \epsilon_j\epsilon_k  }
    \\&
    \leq  b_{m+1}^2 K 
    \sum_{i,j,k=1,2}\sum_{\substack{0<\beta<\alpha\\2\leq|\beta|\leq|\alpha|-2}} \binom{\alpha}{\beta} \binom{|\alpha|}{|\beta|}^{-s}\frac{(|\beta|+1)^s(|\beta|+2)^s(|\alpha-\beta|+1)^s}{ |\alpha |^s  \epsilon_i \epsilon_j\epsilon_k} \\
    &\lec K^4
    b_{m+1}^2
   |\alpha|
    \underbrace{\sum_{0<k<|\alpha|}\binom{|\alpha|}{k}^{1-s}
    \frac{(k+1)^s(|\alpha|-k+1)^s}{|\alpha|}}_{\lec_s 1}
    \underbrace{\frac{(k+2)^s}{|\alpha|^s}}_{\lec_s 1}
     \\& \lec_s K^4 m b_{m+1}^2
    ,
    \label{EQ95}
    \end{split}
\end{align}
 where we used~\eqref{EQ67} in the first inequality and the binomial inequality \eqref{EQ29} in the last. Note that $\epsilon_2^{-3}$ gives $K^3$ according to the choice~\eqref{eps_rel}. 
From the estimates~\eqref{EQ29} and~\eqref{EQ67}, we deduce that 
\begin{align}
    \begin{split}
    \NL_{1,2}
    &\lec \left(\max_{|\beta|\leq|\alpha|}b_\beta(t)\right)^2|\alpha|^{-s}\tau\underbrace{\max_{|\beta|\leq|\alpha|-1}b_\beta(t)}_{\leq K}
    \\&\indeq\times
    \sum_{0<k<|\alpha|}\binom{|\alpha|}{k}^{1-s}
    (|k+1|^s|k+2|^s(|\alpha|-k+1)^s)
    \sum_{i,j,l=1,2}(\epsilon_i\epsilon_j\epsilon_l)^{-1}\tau^{-1}
    \\&\lec K
    \left(\sum_{i,j,k=1,2}(\epsilon_i\epsilon_j\epsilon_k)^{-1}\right)
   b_{m+1}^2 |\alpha|
    \underbrace{\sum_{0<k<|\alpha|}\binom{|\alpha|}{k}^{1-s}
    \frac{(k+1)^s(|\alpha|-k+1)^s}{|\alpha|}}_{\lec_s 1}
    \underbrace{\frac{(k+2)^s}{|\alpha|^s}}_{\lec_s 1}
    \\&\lec 
    \left(K \sum_{i,j,k=1,2}(\epsilon_i\epsilon_j\epsilon_k)^{-1}\right)
    b_{m+1}^2 |\alpha|
    \\& \lec K^4 m b_{m+1}^2 
    .
    \label{EQ96}
    \end{split}
\end{align}
\colb 

The treatments of $\NL_2$, $\NL_3$, and $\NL_4$ are similar, except that the combinatorial inequality is no longer needed. We have
\begin{align}
    \begin{split}
    \NL_2
    &= \frac{\epsilon^{2\alpha}\tau^{2|\alpha|-4}}{|\alpha|!^{2s}}\sum_{i,j,k=1,2}
    \left(\|D^\alpha u\|+\|D^\alpha \partial_iu\|\right)
    \left(\|\partial_j\omega\|+\|\partial_j\partial_k\omega\|\right)\|D^\alpha\omega\|
    \\&= \underbrace{\frac{\epsilon^\alpha\tau^{|\alpha|-2}}{|\alpha|!^s}\|D^\alpha\omega\|}_{= b_{\alpha}}
    \sum_{i,j,k=1,2}
    \left( \frac{\epsilon^{\alpha}\tau^{|\alpha|-2}}{|\alpha|!^s}\|D^\alpha u\|
    +\frac{\epsilon^{\alpha+e_i}\tau^{|\alpha|-1}}{|\alpha|!^s}\|D^{\alpha+e_i} u\|\frac{1}{\tau\epsilon_i}\right)
    \\&\indeq
    \times\left(\frac{\epsilon_j}{1!^s}\|\partial_j\omega\|\frac{1}{\epsilon_j}+\frac{\epsilon_j\epsilon_k}{2!^s}\|\partial_j\partial_k\omega\|\frac{2^s}{\epsilon_j\epsilon_k}\right)
    \\&\lec
    b_\alpha K^3 \sum_{i,j,k=1,2}\left(\max_{|\beta|\leq|\alpha| -1 }b_{\beta} \frac{\tau }{|\alpha|^s}\colb +\max_{|\beta|\leq|\alpha|}b_{\beta}
    \frac{\tau}{|\alpha|^s\tau\epsilon_i}\right)
    \\&\lec
    K^4b_{m+1}^2
    .
    \label{EQ97}
    \end{split}
\end{align}
We applied \eqref{EQ70} and inequality~\eqref{EQ67} in the third step of~\eqref{EQ97}.
Treating $\NL_3$ and $\NL_4$ in a similar manner, we obtain
\begin{align}
    \begin{split}
        \NL_3&=\frac{\epsilon^{2\alpha}\tau^{2|\alpha|-4}}{|\alpha|!^{2s}}
    \sum_{|\beta|=1}\binom{\alpha}{\beta}
    \|D^\beta u\|_{\lio}\|D^{\alpha-\beta}\nabla\omega\|_{\lo}\|D^\alpha\omega\|_{\lo}
    \\&\leq
    \underbrace{\frac{\epsilon^\alpha\tau^{|\alpha|-2}}{|\alpha|!^s}\|D^\alpha\omega\|_{\lo}}_{= b_\alpha }
    \sum_{|\beta|=1}\sum_{i,j,k=1,2}\left(\|D^\beta u\|_{\lo}+\|D^\beta\partial_j\partial_ku\|_{\lo}\right)
    \\&\indeq\times
    \frac{\epsilon^{\alpha-\beta+e_i}\tau^{|\alpha|-2}}{|\alpha|!^s}\|D^{\alpha-\beta+e_i}\omega\|_{\lo}\frac{\epsilon^\beta}{\epsilon_i}\binom{\alpha}{\beta}
    ,
    \end{split}
\end{align}
from where, recalling \eqref{EQ171a},  
\begin{align}
    \begin{split}
    \NL_3
    &\lec b_\alpha b_{m+1} m \left(\|D^\beta u\|_{\lo}+\frac{\epsilon^{\beta+e_j+e_k}\tau}{3!^s}
    \|D^{\beta}\partial_j\partial_k u\|_{\lo}\frac{3!^s}{\epsilon^{\beta+e_j+e_k}\tau}\right)
    \\&
    \lec b_{m+1}^2 m 
    \left(K+K^3 \tau\max_{|\beta'|\leq2}b_{\beta'}\frac{1}{\tau}\right)
    \\&
    \lec K^4 m \,b_{m+1}^2 
    ;
        \label{EQ99a}
    \end{split}
\end{align}
in the second step, we used the definition \eqref{eps_rel} of $\epsilon_2 (t)$, while in the second inequality of~\eqref{EQ99a} we used~\eqref{EQ171},\eqref{EQ171a}, \eqref{EQ70}, the order reduction estimates \eqref{EQ47}--\eqref{EQ48}, and~\eqref{EQ67}.
Next, 
\begin{align}
    \begin{split}
    \NL_4&=
    \frac{\epsilon^{2\alpha}\tau^{2|\alpha|-4}}{|\alpha|!^{2s}}
    \sum_{\substack{|\beta|=|\alpha|-1\\  \beta < \alpha }}\binom{\alpha}{\beta}
    \|D^\beta u\|_{4}
    \underbrace{\|D^{\alpha-\beta}\nabla\omega\|_{4}}
    _{\lec K}\|D^\alpha\omega\|
    \\& 
    \lec
   \underbrace{ \frac{\epsilon^\alpha\tau^{|\alpha|-2}}{|\alpha|!^s}
    \|D^\alpha\omega\|_{\lo}}_{=b_\alpha\leq b_{m+1}}
    \sum_{i, j,k =1,2}\sum_{|\beta|=|\alpha|-1}
    \left(\frac{\epsilon^\beta\tau^{|\beta|-2}}{|\beta|!^s}
    \|D^\beta u\|\frac{\tau}{|\alpha|^s}
    +\frac{\epsilon^{\beta+e_i}\tau^{|\beta|-1}}{(|\beta|+1)!^s}
    \|D^\beta\partial_iu\|\frac{1}{\epsilon_i}\right)K \epsilon^{\alpha-\beta}
    \\&
    \lec b_{m+1} K^3   \tau|\alpha|^{1-s}
    \sum_{i=1,2,3} \epsilon_{ i}^{-1}
    \\&
    \lec  K^4 b_{m+1}
    ,
    \end{split}
   \label{EQ99b}
\end{align}
where we recalled the definition~\eqref{eps_rel} of $\epsilon_2(t)$ again. Combining \eqref{EQ92}, \eqref{EQ94a}, \eqref{EQ96}, \eqref{EQ97}, \eqref{EQ99a}, and~\eqref{EQ99b}, we eventually obtain \eqref{nonlin_to_show}, as required.

\colb
\section*{Acknowledgments} 
IK and QX were supported in part by the NSF grant DMS-2205493,
while
WO was supported by the NSF grant DMS-2511556 and the Simons grant SFI-MPS-TSM-00014233. The authors gratefully acknowledge the hospitality of the Institute
for Advanced Study, where part of this work was completed during the
visit of IK and WO through the Summer Collaborators program.

\small
\medskip\medskip
\noindent
I.~Kukavica\\
{Department of Mathematics, University of Southern California, Los Angeles, CA 90089}\\
e-mail: kukavica@usc.edu

\medskip\medskip
\noindent
W.~S.~O\.za\'nski\\
{Department of Mathematics, Florida State University, Tallahassee, FL 32306}\\
{and Department of Mathematics, Princeton University, Princeton, NJ 08540}\\
e-mail: wozanski@fsu.edu

\medskip\medskip
\noindent
Q.~Xu\\
{Department of Mathematics, University of Southern California, Los Angeles, CA 90089}\\
e-mail: xuqi@usc.edu

\end{document}